\title{Finitely approximable groups and actions\\
Part II: Generic representations}
\author {Christian Rosendal}
\address{Department of Mathematics, Statistics, and Computer Science (M/C 249)\\
University of Illinois at Chicago\\
851 S. Morgan St.\\
Chicago, IL 60607-7045\\
USA}
\email{rosendal.math@gmail.com}
\urladdr{http://www.math.uic.edu/$~$rosendal}
\newcommand {\A}{\mathbf A}
\newcommand {\F}{\mathbb F}
\newcommand {\N}{\mathbb N}
\newcommand {\Q}{\mathbb Q}
\newcommand {\Z}{\mathbb Z}
\renewcommand{\leq}{\ensuremath{\leqslant}}
\renewcommand{\geq}{\ensuremath{\geqslant}}
\newcommand{\tom} {\emptyset}
\newcommand{\saa}{\Rightarrow}
\newcommand{\equi}{\Leftrightarrow}
\newcommand{\til}{\rightarrow}
\newcommand{\Lim}[1]{\mathop{\longrightarrow}\limits_{#1}}
\newcommand {\del}{ \; \big| \;}
\newcommand {\ku} {\mathcal}
\newcommand{\inv}{^{-1}}
\newcommand {\e} {\exists}
\renewcommand {\a} {\forall}
\newtheorem{thm}{Theorem}
\newtheorem{cor}[thm]{Corollary}
\newtheorem{lemme}[thm]{Lemma}
\newtheorem{prop} [thm] {Proposition}
\newtheorem{defi} [thm] {Definition}
\begin{document}

\thanks{The author was partially supported by NSF grants DMS 0901405 and DMS 0919700. The author is also grateful  for the many helpful suggestions of the referee.}

\subjclass[2000]{03E15}

\keywords{Urysohn metric space, subgroup separable groups, isometry groups}

\maketitle

\begin{abstract}
Given a finitely generated group $\Gamma$, we study the space ${\rm Isom}(\Gamma,{\mathbb Q\mathbb U})$ of all 
actions of $\Gamma$ by isometries of the rational Urysohn metric space ${\mathbb Q\mathbb U}$, where ${\rm Isom}(\Gamma,{\mathbb Q\mathbb U})$ is equipped
with the topology it inherits seen as a closed subset of ${\rm Isom}({\mathbb Q\mathbb U})^\Gamma$. When $\Gamma$ is
the free group $\F_n$ on $n$ generators this space is just ${\rm Isom}({\mathbb Q\mathbb U})^n$, but is in general significantly more complicated. We
prove that when $\Gamma$ is finitely generated Abelian  there is a generic
point in ${\rm Isom}(\Gamma,{\mathbb Q\mathbb U})$, i.e., there is a comeagre set of
mutually conjugate isometric actions of $\Gamma$ on ${\mathbb Q\mathbb U}$. 
\end{abstract}

\section{Introduction}
\subsection{Representations of discrete groups in topological groups}\label{rep}
Suppose $\Gamma$ is a discrete group and $G$ a Hausdorff topological
group. A {\em representation} of $\Gamma$ in $G$ is simply a
group homomorphism $\pi\colon \Gamma\til G$. We shall depending on the context use the notations $\pi(g)$ and $g^\pi$ for the image of $g\in \Gamma$ by the homomorphism $\pi$.
Since a representation is a function from $\Gamma$ to $G$, it is formally an element of $G^\Gamma$. Moreover, the set of all representations of $\Gamma$ in $G$ is a closed subset 
of $G^\Gamma$, namely, 
$$
{\rm Rep}(\Gamma,G)=\{\pi\in G^\Gamma\del 
\a g,f \in \Gamma\;\;\;\; \pi(g)\pi(f)=\pi(gf)\}.
$$

The conjugacy action of $G$ on itself extends naturally to a {\em diagonal conjugacy action} of $G$ on $G^\Gamma$ by letting $G$  act separately on each coordinate, and one easily sees that ${\rm Rep}(\Gamma,G)$ is $G$-invariant. Thus, two representations $\pi$ and $\theta$ are {\em conjugate} if there is an element $a\in G$ such that
$$
a\pi(g)a\inv=\theta(g)
$$
for all $g\in \Gamma$.

Now, when $S\subseteq \Gamma$ is a generating set (finite or infinite), any representation $\pi\in {\rm Rep}(\Gamma,G)$ is fully specified by the restriction $\pi|_S\in G^S$ and so the restriction map  
$$
\begin{cases}
\;\;\;{\rm Rep}(\Gamma,G)\til G^S\\
\;\;\;\pi\mapsto \pi|_S
\end{cases}
$$ 
is injective. Moreover, the image of ${\rm Rep}(\Gamma,G)$ in $G^S$ is closed, for to see if some $\sigma\colon S\til G$ extends to a homomorphism $\tilde\sigma\colon \Gamma\til G$, it suffices by von Dyck's Theorem (see Theorem 5.8 \cite{bogopolski}) to check that $\sigma(s_1)\cdots \sigma(s_n)=1$ whenever $s_1\cdots s_n=1$ for $s_i\in S$, which is easily seen to be a closed condition in $G^S$. Also, the inverse map is continuous, since for every $g=s_1\cdots s_n\in \Gamma$, the coordinate map, $\sigma\mapsto \sigma(s_1)\cdots \sigma(s_n)\in G$, is continuous.
Finally, the above map is evidently $G$-equivariant for the diagonal conjugacy actions on respectively $G^\Gamma$ and $G^S$. So, up to a $G$-equivariant homeomorphism,  we may identify ${\rm Rep}(\Gamma,G)$ with a closed $G$-subspace of $G^S$. 

Of course, when $\Gamma$ is finitely generated, we shall choose $S\subseteq \Gamma$ finite. In this way, ${\rm Rep}(\F_n,G)$, where $\F_n$ is the free group on $n$-generators, is naturally identified with $G^n$ and ${\rm Rep}(\Z^n,G)$ is naturally identified with the set of commuting $n$-tuples $(g_1,\ldots, g_n)\in G^n$, i.e., such that $g_ig_j=g_jg_i$ for all $i,i$.

In the following, we shall solely be concerned with the case where
$G$ is Polish, i.e., separable and completely metrisable. Moreover,
$G$ will in fact be the isometry group of a countable metric space, namely the rational Urysohn metric space ${\mathbb Q\mathbb U}$, which we shall define later. Here $G={\rm Isom}({\mathbb Q\mathbb U})$ will be given the {\em permutation group topology}, whose basic open sets are of the form 
$$
U(f;x_1,\ldots, x_n)=\{g\in {\rm Isom}({{\mathbb Q\mathbb U}})\del g(x_i)=f(x_i),\; i\leqslant n\}.
$$
Equivalently, a neighbourhood basis at the identity is given by the sets
$$
G_A=\{g\in {\rm Isom}({{\mathbb Q\mathbb U}})\del \a x\in A\; g(x)=x\},
$$
where $A$ varies over finite subsets of ${\mathbb Q\mathbb U}$.
In this case, representations of $\Gamma$ in ${\rm Isom}({\mathbb Q\mathbb U})$ are just actions of $\Gamma$ on
${\mathbb Q\mathbb U}$ by isometries, and we shall therefore denote the space of
representations by ${\rm Isom}(\Gamma,{\mathbb Q\mathbb U})$ instead of the more
cumbersome ${\rm Rep}(\Gamma,{\rm Isom}({\mathbb Q\mathbb U}))$. Notice that since
${\rm Isom}({\mathbb Q\mathbb U})$ is Polish and $\Gamma$ countable, ${\rm Isom}(\Gamma,{\mathbb Q\mathbb U})$
is a closed subset of the Polish space ${\rm Isom}({\mathbb Q\mathbb U})^\Gamma$ and hence ${\rm Isom}(\Gamma,{\mathbb Q\mathbb U})$ is a Polish space in itself
on which ${\rm Isom}({\mathbb Q\mathbb U})$ is acting continuously.

If $S\subseteq \Gamma$ is a fixed finite generating set of a finitely generated group $\Gamma$, a basic open neighbourhood of a representation $\pi\in {\rm Isom}(\Gamma, {{\mathbb Q\mathbb U}})$ is given by
$$
U(\pi,A)=\{\sigma\in {\rm Isom}(\Gamma,{{\mathbb Q\mathbb U}})\del g^\pi(x)=g^\sigma(x), \; x\in A\;\&\; g\in S\},
$$
where $A\subseteq {\mathbb Q\mathbb U}$ is any finite subset.

Now, if instead $\Gamma$ fails to be finitely generated, we need also to specify the set $S\subseteq \Gamma$. So the basic open neighbourhoods of $\pi\in {\rm Isom}(\Gamma,{{\mathbb Q\mathbb U}})$ are of the form 
$$
U(\pi,A,S)=\{\sigma\in {\rm Isom}(\Gamma,{{\mathbb Q\mathbb U}})\del g^\pi(x)=g^\sigma(x), \; x\in A\;\&\; g\in S\},
$$
where $A\subseteq {\mathbb Q\mathbb U}$ and $S\subseteq  \Gamma$ are any finite subsets.

In any case, we see that if $\pi\in U(\sigma,A)$, resp. $\pi\in U(\sigma, A,S)$, then $U(\pi,A)=U(\sigma,A)$, resp. $U(\pi,A,S)=U(\sigma,A,S)$.

\begin{defi}
Let $\Gamma$ be a countable discrete group and  $G$ a Polish group. We say that a representation $\pi$ of $\Gamma$ in $G$ is
\begin{itemize}
\item  {\em generic} if the $G$-orbit of $\pi$, $G\cdot \pi$, is comeagre in ${\rm Rep}(\Gamma,G)$.
\item {\em locally generic} if the $G$-orbit of $\pi$ is non-meagre in
${\rm Rep}(\Gamma,G)$.
\item {\em dense} if the $G$-orbit of $\pi$ is dense in ${\rm Rep}(\Gamma,G)$.
\end{itemize}

\end{defi}
The set of dense representations is easily seen to be a $G_\delta$
set and hence if non-empty it is dense $G_\delta$. Thus, if there is
a dense and a locally generic representation, then the locally
generic representation must also be dense and hence generic.

The existence of dense and generic
representations of the groups $\Z$ and $\F_n$ in various  Polish groups has been
extensively studied in the literature (see, e.g., 
\cite{akin,glasner,herwig,ivanov,book, turbulence,lascar,truss}
and the references therein). In the literature on ergodic theory the
existence of a dense representation of $\Z$ in a Polish group $G$,
which is of course just the existence of a dense conjugacy class in
$G$, is sometimes denoted by saying that $G$ has the topological
Rokhlin property, as the proof of this in the case of $G={\rm
Aut}([0,1],\lambda)$ relies on Rokhlin's lemma. Also in the
literature on model theory, the existence of generic representations
of $\F_n$ for all $n$ in a Polish group $G$ is denoted by saying
that $G$ has {\em ample generics}. The import of ample generics or even a comeagre conjugacy class for
the structure theory of $G$ is considerable as can be sampled from
\cite{hodges,turbulence, macpherson}.


\subsection{The rational Urysohn metric space}
The {\em Urysohn metric space} $\mathbb U$ is a universal separable metric space initially constructed by P. Urysohn \cite{urysohn}, which is fully characterised up to isometry by being separable and complete, together with the  following extension property.
\begin{quotation}
If $\phi\colon X\til \mathbb U$ is an isometric embedding of a
finite metric space $X$ into $\mathbb U$ and $Y=X\cup \{y\}$ is a one
point metric extension of $X$, then $\phi$ extends to an isometric
embedding of $Y$ into $\mathbb U$.
\end{quotation}
There is also a rational variant of $\mathbb U$ called the {\em rational Urysohn metric space}, which we denote by ${\mathbb Q\mathbb U}$. This is, up to isometry, the unique countable metric space with only rational distances such that the following variant of the above extension property holds.
\begin{quotation}
If $\phi\colon X\til {\mathbb Q\mathbb U}$ is an isometric embedding of a
finite  metric space $X$ into ${\mathbb Q\mathbb U}$ and $Y=X\cup \{y\}$ is a one
point metric extension of $X$ whose metric only takes rational distances, then $\phi$ extends to an isometric
embedding of $Y$ into ${\mathbb Q\mathbb U}$.
\end{quotation}

An isometry $f\colon A\til B$ between finite subsets $A$ and $B$ of the rational Urysohn space ${\mathbb Q\mathbb U}$ is said to be a {\em finite partial isometry} of ${\mathbb Q\mathbb U}$. So the restriction of any full isometry of ${\mathbb Q\mathbb U}$, that is, an isometry of ${\mathbb Q\mathbb U}$ onto itself,  to a finite subset is a finite partial isometry. But more importantly, by a back and forth argument, any finite partial isometry of ${\mathbb Q\mathbb U}$ extends to a full isometry of ${\mathbb Q\mathbb U}$, in other words, ${\mathbb Q\mathbb U}$ is {\em ultrahomogeneous}.

But much more is true. Namely, we have the following fact due to V. V. Uspenski\u\i{} \cite{uspenskii}: If $\Gamma$ is a group acting by isometries on a finite subspace $A\subseteq {\mathbb Q\mathbb U}$, then the action of $\Gamma$ extends to an action by isometries on all of ${\mathbb Q\mathbb U}$. To see this, we can without loss of generality suppose that $\Gamma$ is finite. Also, modulo an inductive construction, it suffices to show that for any one-point extension $B\supseteq A$, there is a further finite extension $C\supseteq B$ and an action of $\Gamma$ on $C$ extending the action of $\Gamma$ on $A$.  We identify the unique point in $B\setminus A$ with $1\in \Gamma$. We can now extend the metric $d$ on $B=A\sqcup\{1\}$ to all of $C=A\sqcup \Gamma$, by letting $d(a,h)=d(h\inv a, 1)$ for $a\in A$ and $h\in \Gamma$, and setting
$$
d(g,h)=\min \big(d(a,g)+d(a,h)\del a\in A\big).
$$
This is easily seen to be a metric extending the metric on $B$ and the invariance under the left-shift action by $\Gamma$ is trivial.

For actions of a group $\Gamma$ on infinite subspaces of ${\mathbb Q\mathbb U}$, we must change the conclusion somewhat. For this, it will be useful to introduce some terminology.
If $\pi\colon \Gamma\curvearrowright X$ and $\sigma\colon \Gamma\curvearrowright Y$ are actions of a group
$\Gamma$ on sets $X\subseteq Y$, we say that $\pi$ is a {\em subaction} or {\em subrepresentation} of $\sigma$, denoted by $\pi\leqslant \sigma$, if for any $x\in X$ and $g\in \Gamma$, $g^\pi(x)=g^\sigma(x)$. So, in particular, for this to hold, $X$ must be invariant under the action $\sigma$.
Also, if $\Delta\leqslant \Gamma$ and $\pi\colon \Gamma\curvearrowright X$ is an action of $\Gamma$ on a set $X$, we let $\pi|_\Delta$ denote the corresponding action of $\Delta$ on $X$.

A $\Gamma$-map between actions $\pi\colon \Gamma\curvearrowright X$ and $\sigma\colon \Gamma\curvearrowright Y$  is a function $\iota\colon X\til Y$ such that for any $x\in X$ and $g\in \Gamma$, $\iota(g^\pi(x))=g^\sigma(\iota(x))$.  We also say that $\iota$ {\em  conjugates} $\pi$ with $\sigma$. Note that we do not require $\iota$ to be surjective.

Using essentially the above construction and Kat\u etov's construction of the Urysohn space, Uspenski\u\i{} proved the following result, which we state for the rational Urysohn space.

\begin{thm}(V. V. Uspenski\u\i{} \cite{uspenskii})\label{uspenskii}
Suppose $\pi\colon \Gamma\curvearrowright (X,d_X)$ is an action of a  group $\Gamma$ by isometries on a rational metric space $(X,d_X)$. Then there is an   action $\sigma\colon \Gamma\curvearrowright {{\mathbb Q\mathbb U}}$ by isometries and an isometric injection $\iota \colon (X,d_X)\til {\mathbb Q\mathbb U}$ conjugating $\pi$ with $\sigma$.
\end{thm}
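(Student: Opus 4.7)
The plan is to construct a countable $\Gamma$-invariant rational metric space $Y\supseteq X$ satisfying the one-point extension property characterising ${\mathbb Q\mathbb U}$, and then transport the $\Gamma$-action on $Y$ through an isometry $Y\cong {\mathbb Q\mathbb U}$.

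The key building block is a $\Gamma$-equivariant version of the finite-case construction the excerpt spells out. Suppose $Z$ is a countable $\Gamma$-invariant rational metric space, $A\subseteq Z$ is a finite subset, and $f\colon A\til \mathbb{Q}_{\geq 0}$ is a Katětov one-point extension of $A$, meaning $|f(a)-f(a')|\leq d_Z(a,a')\leq f(a)+f(a')$ for all $a,a'\in A$. I then extend $Z$ by adjoining a fresh $\Gamma$-orbit: pick an orbit representative $b'$ with prescribed distances $d(b',a)=f(a)$ for $a\in A$ and let $d(g^\pi b',z)=d(b',(g^\pi)\inv z)$ by equivariance; the remaining distances $d(b',z)$ for $z\in Z\setminus A$ and $d(b',g^\pi b')$ for $g$ outside the relevant stabiliser $H$ are defined by the min-over-$A$ formulas paralleling the ones in the excerpt. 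The triangle inequality goes through by exactly the same calculation as in the finite case, since each individual verification only involves finitely many distances coming from $A$.

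With this building block in hand, I enumerate in a dovetailing manner all pairs $(A,f)$ with $A$ a finite subset of the space currently being built and $f$ a rational Katětov one-point extension of $A$, and recursively build an increasing chain $X=X_0\subseteq X_1\subseteq X_2\subseteq\ldots$ of countable $\Gamma$-invariant rational metric spaces, arranging that every such extension eventually gets realised. The union $Y=\bigcup_n X_n$ is then a countable $\Gamma$-invariant rational metric space with the extension property characterising ${\mathbb Q\mathbb U}$, so the back-and-forth argument recalled earlier in the excerpt yields an isometry $\iota_0\colon Y\til {\mathbb Q\mathbb U}$. The rule $\sigma(g)(\iota_0(y))=\iota_0(g^\pi(y))$ defines an isometric action $\sigma\colon \Gamma\curvearrowright {\mathbb Q\mathbb U}$, and $\iota:=\iota_0|_X$ is the required isometric injection conjugating $\pi$ with $\sigma$.

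The main obstacle is verifying that the building-block step genuinely defines a metric: both that the min-over-$A$ values are compatible with distances already present in $Z$ and that distinct elements of the adjoined $\Gamma$-orbit are not accidentally identified beyond what is forced by the stabiliser. This is precisely the computation the excerpt carries out in the finite case, and it transfers to the present setting because each step adjoins only a single orbit of new points and interacts with the rest of $Z$ solely through the finite set $A$; everything else is routine bookkeeping to ensure that the dovetailing catches every $(A,f)$ that eventually appears.
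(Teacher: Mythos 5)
Your proposal is correct and takes essentially the same route the paper indicates but does not write out: a $\Gamma$-equivariant one-orbit extension generalising the finite construction displayed just before the theorem (with the new distances given by free amalgamation over the finite set $A$), iterated in a Kat\u etov-style tower whose union is a countable invariant rational metric space with the one-point extension property, hence isometric to ${\mathbb Q\mathbb U}$. The only detail you leave implicit --- exactly when two points $g\cdot b'$ and $h\cdot b'$ of the freshly adjoined orbit are to be identified --- is harmless: taking $H$ to be the stabiliser of the pair $(A,f)$, or simply passing to the metric quotient of the resulting pseudometric on $Z\sqcup\Gamma$, is consistent with all the displayed formulas and keeps each step countable when $\Gamma$ is countable, which is the only case the paper needs.
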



\subsection{The profinite topology and finitely approximable groups}
The {\em profinite topology} on a group $\Gamma$ is the topology generated by cosets $gK$ of finite index normal subgroups $K\leqslant \Gamma$. Thus, a subset $S\subseteq \Gamma$ is closed in the profinite topology on $\Gamma$ if for any $g\in \Gamma\setminus S$, there is a finite index normal subgroup $K\leqslant \Gamma$ such that $g\notin SK$. Since this is a group topology, $\Gamma$ is Hausdorff if and only if $\{1\}$ is closed, i.e., if for any $g\neq 1$ there is a finite index subgroup $K$ not containing $g$. In other words, $\Gamma$ is Hausdorff if and only if it is residually finite.

A group $\Gamma$ is {\em subgroup separable} if any finitely generated subgroup $H\leqslant \Gamma$ is closed in the profinite topology on $\Gamma$. So, as $\{1\}$ is finitely generated, subgroup separability implies residual finiteness. M. Hall \cite{hall1,hall2} originally proved that free groups are subgroup separable. On the other hand, e.g., $\F_2\times \F_2$ is residually finite, but not subgroup separable \cite{mihailova}.

However, the even stronger notion of relevance to us is the  Ribes--Zalesski\u\i{} property, or property (RZ) for brevity. Here a group $\Gamma$ is said to have the {\em  Ribes--Zalesski\u\i{} property} if any product $H_1H_2\cdots H_n$ of finitely generated subgroups $H_i\leqslant \Gamma$ is closed in the profinite topology on $\Gamma$. This property was originally proven for free groups by L. Ribes and P.  A. Zalessski\u\i{} in \cite{ribes} and T. Coulbois \cite{coulbois} showed that if both $\Gamma$ and $\Lambda$ have property (RZ), then so does $\Gamma*\Lambda$.

We note that if $H_1, \ldots, H_n$ are finitely generated subgroups of an Abelian group $\Gamma$, then $H=H_1\cdots H_n$ is again a finitely generated subgroup of $\Gamma$. So for Abelian groups, subgroup separability and property (RZ) coincides. It is an easy exercise to show that finitely generated Abelian groups are subgroup separable, which essentially follows from them being residually finite.


\subsection{Results}
The starting point of our study is a long series of investigations by model theorists into which automorphism groups of countable structures have comeagre conjugacy classes or ample generics. Often, but not always, the existence of a comeagre conjugacy class or ample generics is proved by verifying a specific combinatorial property of closing off finite partial automorphisms, see, e.g., \cite{herwig, hodges}.

For the specific case of the rational Urysohn metric space, this property is due to S. Solecki, whose proof in turn relied on work by B. Herwig and D. Lascar \cite{herwig}.
\begin{thm}[S. Solecki \cite{solecki}]\label{solecki}
Let $A$ be a finite rational metric space. Then there is a finite rational metric space $B$ containing $A$ and such that any partial isometry of $A$ extends to a full isometry of $B$.
\end{thm}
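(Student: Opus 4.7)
My plan is to deduce the theorem from the Herwig--Lascar extension theorem \cite{herwig} for finite relational structures, via a coding of finite rational metric spaces as finite edge-labeled graphs. First, by scaling the metric on $A$ by a common denominator of its distances, I may assume that all distances in $A$ are positive integers bounded by some $N$; partial isometries are insensitive to such scaling, and an integer extension can be scaled back to a rational one. Next, I view $A$ as a structure in the finite relational language $L = \{R_1, \ldots, R_N\}$ of symmetric binary relations, with $R_i(x,y)$ interpreted to mean $d_A(x,y) = i$. Under this coding, the partial isomorphisms of $A$ as an $L$-structure are precisely the partial isometries of $A$ as a metric space.

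Let $\mathcal{F}$ be the finite family of ``forbidden'' $L$-structures consisting of triples $(x,y,z)$ carrying labels $(i,j,k)$ with $k > i+j$ (triangle inequality violations), together with pairs carrying two distinct labels $R_i, R_j$ with $i \neq j$. Since $A$ is a metric space, $A$ omits every member of $\mathcal{F}$. Applying the Herwig--Lascar theorem then furnishes a finite $L$-structure $B_0 \supseteq A$ that also omits $\mathcal{F}$ and in which every partial isomorphism of $A$ extends to a full automorphism of $B_0$.

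It remains to convert $B_0$ back into a metric space $B$ on the same underlying set while preserving these extensions as isometries. I define $d_B(x,y)$ to be the shortest-path weighted distance in the labeled graph $B_0$, where an edge carrying $R_i$ has weight $i$. Symmetry and the triangle inequality of $d_B$ are immediate, and any automorphism of $B_0$ preserves the labeled graph structure and hence also $d_B$. Two non-trivial points remain: (i) every pair in $B_0$ must be joined by some labeled path, so that $d_B$ is finite-valued; and (ii) $d_B$ must restrict to $d_A$ on $A$. Point (ii) follows by induction on path length from the omission of triangle violations, since iterating the triangle inequality shows that no path through $B_0$ between $x, y \in A$ can be strictly shorter than the direct edge of weight $d_A(x,y)$. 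The main obstacle is point (i), which requires going beyond the bare Herwig--Lascar output: Solecki's key contribution is to arrange, either by a preliminary closure of $B_0$ under path witnesses or by a refinement of the application of Herwig--Lascar, that every pair in the extension carries a label (equivalently, that the extension is already a metric space). Once (i) and (ii) are established, $(B_0, d_B)$ is a finite rational metric space extending $A$, and the full automorphisms furnished by Herwig--Lascar act as isometries of $(B_0, d_B)$, yielding the desired $B$.
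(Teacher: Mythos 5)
The paper does not prove this statement: it is quoted from Solecki \cite{solecki} with the remark that his proof relied on the Herwig--Lascar theorem \cite{herwig}, so your overall strategy is the historically correct one and there is no internal proof to compare against. There is, however, a genuine gap at your step (ii). You claim that omitting labelled triangles $(i,j,k)$ with $k>i+j$ forces every labelled path in $B_0$ between $x,y\in A$ to have weight at least $d_A(x,y)$, by ``iterating the triangle inequality'' along the path. This induction does not go through, because the structure $B_0$ produced by Herwig--Lascar need not be a complete labelled graph: to iterate the triangle inequality along a path $x,u_1,\ldots,u_k,y$ you need the chords $xu_2, xu_3,\ldots$ to carry labels, and nothing guarantees that they do. Concretely, a four-point configuration with edges $xu$, $uv$, $vy$ of weight $1$, an edge $xy$ of weight $10$, and no edges $xv$, $uy$ contains no forbidden triangle at all, yet the path metric gives $d_B(x,y)=3\neq 10$. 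The fix --- and this is the actual content of Solecki's argument --- is to forbid all \emph{non-metric cycles}, i.e., cycles $x_0,\ldots,x_n=x_0$ carrying labels $r_1,\ldots,r_n$ with $r_i>\sum_{j\neq i}r_j$ for some $i$; since all labels lie in $\{1,\ldots,N\}$, such a cycle has length at most $N$, so this is still a finite family and Herwig--Lascar applies.

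Two further points. Your ``main obstacle'' (i) is in fact the easy one: if some pairs of $B_0$ are unjoined, replace the path metric by its truncation at $N$ (or at ${\rm diam}(A)$), which is again a metric and still restricts to $d_A$ on $A$; no refinement of Herwig--Lascar is needed for this. On the other hand, you have omitted the verification of the actual hypothesis of the Herwig--Lascar theorem, which is not merely that $A$ omits $\mathcal{F}$ but that there exists \emph{some} (possibly infinite) $\mathcal{F}$-omitting structure $M\supseteq A$ in which every partial isomorphism of $A$ extends to an automorphism; here one must invoke the integer Urysohn space with distances in $\{0,1,\ldots,N\}$ together with its ultrahomogeneity. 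With the enlarged family of forbidden cycles and this verification added, your outline becomes a correct reconstruction of Solecki's proof.
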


From this, we have the following corollary (see \cite{turbulence,solecki}) .
\begin{cor}
For any $n\geq 1$, the free group $\F_n$ has a generic representation in ${\rm Isom}({\mathbb Q\mathbb U})$, i.e., ${\rm Isom}({\mathbb Q\mathbb U})$ has ample generics.
\end{cor}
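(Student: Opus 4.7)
The plan is to reduce the existence of a generic representation of $\F_n$ to two combinatorial properties of the class $\ku K_p^n$ of pairs $(A,\vec\phi)=(A,\phi_1,\ldots,\phi_n)$, where $A$ is a finite rational metric space and each $\phi_i$ is a partial isometry of $A$, and then to verify these properties using Theorem~\ref{solecki}. Via restriction to a free generating set one identifies ${\rm Isom}(\F_n,{\mathbb Q\mathbb U})$ with ${\rm Isom}({\mathbb Q\mathbb U})^n$ equipped with diagonal conjugation by ${\rm Isom}({\mathbb Q\mathbb U})$, and the general framework of Kechris--Rosendal \cite{turbulence} reduces the existence of a comeagre orbit for this action to the joint embedding property (JEP) together with the weak amalgamation property (WAP) in $\ku K_p^n$.

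The JEP is straightforward: given $(A,\vec\phi),(A',\vec\phi{}')\in\ku K_p^n$, form the disjoint union $C=A\sqcup A'$ with metric extending those on $A$ and $A'$ by setting $d(a,a')=M$ for all $a\in A,\;a'\in A'$, where $M$ is any rational number strictly larger than both diameters, and equip $C$ with the partial isometries $\phi_i\cup\phi'_i$. For WAP, fix $(A,\vec\phi)\in\ku K_p^n$ and apply Solecki's theorem to obtain an extension $B\supseteq A$ on which every partial isometry of $A$, in particular each $\phi_i$, extends to a full isometry $\psi_i$ of $B$, so that $(B,\vec\psi)\in\ku K_p^n$ and each $\psi_i$ is a bijection of $B$. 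I claim $(B,\vec\psi)$ witnesses WAP at $(A,\vec\phi)$: given two further extensions $(C_1,\vec\chi{}^1),(C_2,\vec\chi{}^2)\supseteq(B,\vec\psi)$ in $\ku K_p^n$, form the free metric amalgam $D=C_1\sqcup_B C_2$, in which the distance between $x\in C_1\setminus B$ and $y\in C_2\setminus B$ is $d(x,y)=\min_{b\in B}\bigl(d_{C_1}(x,b)+d_{C_2}(b,y)\bigr)$, and set $\chi_i=\chi^1_i\cup\chi^2_i$. This is well defined on $D$ because $\chi^1_i$ and $\chi^2_i$ both restrict to the total map $\psi_i$ on $B$; moreover, the bijectivity of $\psi_i\colon B\to B$ allows the substitution $b=\psi_i(b')$ under the minimum in the distance formula, yielding that $\chi_i$ preserves all cross-amalgam distances. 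Hence $(D,\vec\chi)\in\ku K_p^n$ is an amalgam of $(C_1,\vec\chi{}^1)$ and $(C_2,\vec\chi{}^2)$ over $(B,\vec\psi)$, and a fortiori over $(A,\vec\phi)$.

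The main obstacle is precisely this WAP verification, which is why Solecki's theorem is essential: the naive free metric amalgamation over a shared finite subspace need not respect a tuple of partial isometries unless those isometries are already everywhere defined on the shared subspace, for otherwise the substitution step in the cross-distance calculation breaks down. The intermediate object $(B,\vec\psi)$ provided by Theorem~\ref{solecki}, in which the $\phi_i$ have become total bijections of $B$, is exactly what makes free amalgamation preserve the isometry structure. Once JEP and WAP are established, the Kechris--Rosendal characterisation produces a comeagre orbit in ${\rm Isom}({\mathbb Q\mathbb U})^n\cong{\rm Isom}(\F_n,{\mathbb Q\mathbb U})$, yielding a generic representation of $\F_n$ and, since $n$ is arbitrary, ample generics for ${\rm Isom}({\mathbb Q\mathbb U})$.
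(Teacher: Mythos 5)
Your argument is correct and is essentially the proof the paper has in mind: the paper gives no proof of this corollary itself but defers to \cite{turbulence,solecki}, where precisely this reduction to the joint embedding property and (cofinal, hence weak) amalgamation for tuples of partial isometries is carried out, with the amalgamation step verified exactly as you do via Solecki's extension theorem followed by free metric amalgamation over a base on which the partial isometries have become total bijections. Your identification of why totality on the base is needed for the cross-distance substitution is the right key point, and I see no gaps.
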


The main goal of the present paper is to establish the same conclusion for a much larger class of finitely generated groups, namely those satisfying property (RZ). For this, we will rely on the explicit analysis of Solecki's Theorem \ref{solecki} from our companion paper \cite{approx}, which shows that the groups all of whose actions on ${\mathbb Q\mathbb U}$ are {\em finitely approximable} are exactly those with property (RZ).
\begin{thm}
Let $\Gamma$ be a finitely generated group with property (RZ). Then  there is a
generic representation $\pi$ in ${\rm Isom}(\Gamma,{\mathbb Q\mathbb U})$.
Moreover, every orbit of $\Gamma$ under the action $\pi$ on ${\mathbb Q\mathbb U}$
is finite.
\end{thm}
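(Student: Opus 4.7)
The strategy is to construct $\pi^*$ as a Fra\"iss\'e limit in the class $\ku K$ of pairs $(A,\alpha)$, where $A$ is a finite rational metric space and $\alpha\colon \Gamma\curvearrowright A$ is an isometric action, with morphisms being $\Gamma$-equivariant isometric embeddings. The hereditary and joint embedding properties of $\ku K$ are routine, so the essential step is to verify the amalgamation property: given embeddings $(A,\alpha)\hookrightarrow(B_i,\beta_i)$ in $\ku K$ for $i=1,2$, there is a common extension $(C,\gamma)\in\ku K$ into which both $(B_i,\beta_i)$ embed over $(A,\alpha)$. This is precisely the main technical content of the companion paper \cite{approx}: property (RZ) is exactly what guarantees that the various finite quotients of $\Gamma$ through which the amalgamated action must factor fit together coherently, via a refinement of Solecki's extension theorem. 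With amalgamation in hand, $\ku K$ is a Fra\"iss\'e class.

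Let $(\mathbf F,\pi^*)$ be its Fra\"iss\'e limit. Since $\ku K$ contains every finite rational metric space with the trivial $\Gamma$-action, $\mathbf F$ is universal among countable rational metric spaces, and a standard one-point-extension argument---realizing prescribed rational one-point extensions of a finite $A\subseteq\mathbf F$ by adjoining new $\Gamma$-orbits with suitable finite-index stabilizers (available by residual finiteness of $\Gamma$, which follows from (RZ)) and then invoking universality---yields the pure-metric one-point extension property, so $\mathbf F\cong{\mathbb Q\mathbb U}$. Thus $\pi^*\in{\rm Isom}(\Gamma,{\mathbb Q\mathbb U})$ with all orbits finite. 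To show that the ${\rm Isom}({\mathbb Q\mathbb U})$-orbit of $\pi^*$ is dense, fix $\sigma\in{\rm Isom}(\Gamma,{\mathbb Q\mathbb U})$, a finite generating set $S\subseteq\Gamma$, and a finite $A\subseteq{\mathbb Q\mathbb U}$; set $B=A\cup\sigma(S)A$; by \cite{approx} the partial $\Gamma$-action on $B$ induced by $\sigma$ is the restriction of a full action $\pi_C$ on some finite $\Gamma$-space $(C,\pi_C)\supseteq B$; use universality of $\mathbf F={\mathbb Q\mathbb U}$ to obtain a $\Gamma$-equivariant isometric embedding $\phi\colon(C,\pi_C)\hookrightarrow({\mathbb Q\mathbb U},\pi^*)$; and use ultrahomogeneity of ${\mathbb Q\mathbb U}$ to extend $\phi^{-1}|_{\phi(B)}$ to a full isometry $\theta\in{\rm Isom}({\mathbb Q\mathbb U})$. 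A direct calculation, using the $\Gamma$-equivariance of $\phi$ and the agreement of $\pi_C$ with $\sigma$ on moves from $A$ into $B$, gives $\theta\pi^*\theta^{-1}\in U(\sigma,A)$.

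To promote density to comeagreness, I characterize the orbit of $\pi^*$ as the set $\ku G$ of $\pi\in{\rm Isom}(\Gamma,{\mathbb Q\mathbb U})$ such that (a) every $\pi$-orbit in ${\mathbb Q\mathbb U}$ is finite, and (b) for every finite $\pi$-invariant $F\subseteq{\mathbb Q\mathbb U}$ and every extension $(F,\pi|_F)\hookrightarrow(G,\pi_G)$ in $\ku K$, the inclusion of $F$ into ${\mathbb Q\mathbb U}$ extends to a $\Gamma$-equivariant isometric embedding $(G,\pi_G)\hookrightarrow({\mathbb Q\mathbb U},\pi)$. Condition (b) is plainly a countable intersection of open conditions on $\pi$, hence $G_\delta$. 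Condition (a) is $G_\delta$ as well: $\Gamma$ being finitely generated, ``the $\pi$-orbit of $x$ is finite'' is equivalent to the existence of a finite $Y\ni x$ closed under $\pi(S\cup S^{-1})$, which is a countable union of clopen conditions on $\pi$, hence open, and intersecting over the countable set ${\mathbb Q\mathbb U}$ yields a $G_\delta$. A standard back-and-forth, driven by the amalgamation property of $\ku K$ and the finite-orbit condition (a) (which ensures that the $\Gamma$-saturation of any finite subset of ${\mathbb Q\mathbb U}$ remains finite and hence lies in $\ku K$), shows that any two members of $\ku G$ are ${\rm Isom}({\mathbb Q\mathbb U})$-conjugate, so $\ku G$ coincides with the orbit of $\pi^*$. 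Combined with density, this makes the orbit of $\pi^*$ a dense $G_\delta$---hence comeagre---subset of ${\rm Isom}(\Gamma,{\mathbb Q\mathbb U})$, proving genericity. The principal obstacle throughout is the amalgamation property for $\ku K$, which is where property (RZ) is essentially consumed via \cite{approx}.
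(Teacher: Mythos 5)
Your proposal is essentially correct, but it takes a genuinely different route from the paper. The paper does not construct the generic representation explicitly: it first proves that a dense orbit always exists, then invokes a Baire-category criterion for non-meagre orbits (Lemmas \ref{turbulence} and \ref{comeagre crit}), which reduces everything to two facts --- that any $\rho$ can be perturbed on a finite set $A$ to a $\sigma$ admitting a finite $\Gamma^\sigma$-invariant $B\supseteq A$ (this is where the main theorem of \cite{approx}, and hence property (RZ), is consumed), and that over such an invariant $B$ one has topological transitivity of $G_B$ (Lemma \ref{extending finite orbits}, proved by free amalgamation of two actions over $B$). You instead build the generic as the Fra\"iss\'e limit of the class $\ku K$ of finite rational $\Gamma$-metric spaces and exhibit its conjugacy class as a dense $G_\delta$ set via an explicit back-and-forth characterisation. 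Your route is heavier but buys more: an explicit description of the generic action and an immediate proof of the ``moreover'' clause on finite orbits, which in the paper's approach requires the extra (implicit) observation that the set of representations with all orbits finite is a dense $G_\delta$ meeting the non-meagre orbit.

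One conceptual correction: you locate the use of property (RZ) in the amalgamation property of $\ku K$ and attribute that to \cite{approx}. In fact, amalgamation in $\ku K$ holds for \emph{every} group and is elementary --- given $(A,\alpha)\hookrightarrow (B_i,\beta_i)$, the free metric amalgam $B_1\sqcup_A B_2$ with $\partial(x,y)=\min_{z\in A}(d(x,z)+d(z,y))$ carries the combined action isometrically, exactly as in the proof of Lemma \ref{extending finite orbits}; no finite quotients need to ``fit together''. What \cite{approx} actually supplies, and where (RZ) is genuinely indispensable, is the step you use in your density argument: extending the \emph{partial} action induced by an arbitrary $\sigma\in{\rm Isom}(\Gamma,{\mathbb Q\mathbb U})$ on a finite set $B$ to a full action on a finite rational metric space $C\supseteq B$. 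Without (RZ) the Fra\"iss\'e limit of $\ku K$ still exists and is still an action on ${\mathbb Q\mathbb U}$ with finite orbits, but its orbit fails to be dense, so the theorem fails there and not at amalgamation. Relatedly, your appeal to residual finiteness when verifying the one-point extension property of the limit is unnecessary: the kernel of the action on the (finite) saturation of $A$ already has finite index, and the new orbit can be taken to be a coset space of that kernel. With these attributions straightened out, your argument is sound.
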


So, in particular, this applies to finitely generated Abelian groups.

For groups that are not finitely generated, the situation is in general somewhat different. However, one can make generic representations cohere over an increasing approximating chain of subgroups, which leads to the following result.

\begin{thm}Let $\Gamma$ be a countable group that is an increasing union of finitely generated groups with property (RZ). Then there is a
representation $\pi$ of $\Gamma$ on ${\mathbb Q\mathbb U}$ such that for all
finitely generated subgroups $\Delta\leq\Gamma$ the representation $\pi|_\Delta$ is generic.
\end{thm}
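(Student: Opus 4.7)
The plan is a Baire-category argument in the Polish space ${\rm Isom}(\Gamma,{\mathbb Q\mathbb U})$. Write $\Gamma=\bigcup_n\Gamma_n$ as an increasing union of finitely generated (RZ) subgroups. By the preceding theorem, for each $n$ the space ${\rm Isom}(\Gamma_n,{\mathbb Q\mathbb U})$ contains a comeagre ${\rm Isom}({\mathbb Q\mathbb U})$-orbit $C_n$ whose members have only finite orbits on ${\mathbb Q\mathbb U}$; write $C_n=\bigcap_k U_n^k$ with $U_n^k$ open dense in ${\rm Isom}(\Gamma_n,{\mathbb Q\mathbb U})$. Let $r_n\colon{\rm Isom}(\Gamma,{\mathbb Q\mathbb U})\til{\rm Isom}(\Gamma_n,{\mathbb Q\mathbb U})$ be the continuous restriction map. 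The preimages $r_n^{-1}(U_n^k)$ are automatically open, and if they are also dense then $\bigcap_{n,k}r_n^{-1}(U_n^k)$ is comeagre in ${\rm Isom}(\Gamma,{\mathbb Q\mathbb U})$; any $\pi$ in this intersection has $\pi|_{\Gamma_n}\in C_n$, hence generic, for every $n$.

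For density, given a basic neighbourhood $U(\pi_0,A,S)\subseteq{\rm Isom}(\Gamma,{\mathbb Q\mathbb U})$, I pick $m\geqslant n$ with $S\subseteq\Gamma_m$. Density of $C_m$ in ${\rm Isom}(\Gamma_m,{\mathbb Q\mathbb U})$ yields $\sigma_m\in C_m$ agreeing with $\pi_0|_{\Gamma_m}$ on $A$ for $S$, and since $\sigma_m$ has finite orbits the union $B\subseteq{\mathbb Q\mathbb U}$ of the $\sigma_m$-orbits of points of $A$ is finite and $\Gamma_m$-invariant. I then form the induced $\Gamma$-space $X=\Gamma\times_{\Gamma_m}B$ and equip it with a $\Gamma$-invariant rational metric extending the metric on the canonical copy of $B$ inside $X$, by inductively applying the elementary orbit-gluing construction presented in the paragraph preceding Theorem \ref{uspenskii}. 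Theorem \ref{uspenskii} then produces a representation $\sigma\in{\rm Isom}(\Gamma,{\mathbb Q\mathbb U})$ and an isometric $\Gamma$-equivariant embedding of $X$ into ${\mathbb Q\mathbb U}$; composing with an ambient isometry via ultrahomogeneity of ${\mathbb Q\mathbb U}$ we may arrange the image of $B$ to be the original $B\subseteq{\mathbb Q\mathbb U}$, so that $\sigma\in U(\pi_0,A,S)$. Now $\sigma|_{\Gamma_m}$ contains a $\Gamma_m$-invariant isomorphic copy of $\sigma_m$, and using the universal extension property characterizing $C_m$ (from the proof of the preceding theorem and from \cite{approx}) together with the back-and-forth uniqueness of the generic representation, one concludes $\sigma|_{\Gamma_m}\in C_m$. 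Finally, since that same extension property is manifestly preserved when passing to a subgroup, $\sigma|_{\Gamma_n}\in C_n$.

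The same restriction-preservation principle handles the final clause of the theorem: every finitely generated $\Delta\leqslant\Gamma$ sits in some $\Gamma_n$, and $\pi|_\Delta=(\pi|_{\Gamma_n})|_\Delta$ then inherits genericity in ${\rm Isom}(\Delta,{\mathbb Q\mathbb U})$. The principal obstacle is the density step: simultaneously realizing the prescribed local behaviour on $(A,S)$, producing the equivariant extension from the $\Gamma_m$-action to a $\Gamma$-action, and certifying $\sigma|_{\Gamma_m}\in C_m$. All three tasks hinge on the finite-orbit property of generic $\Gamma_m$-representations, which keeps the initial data $B$ finite and makes the induced $\Gamma$-space $X$ a countable rational metric space, and on the universal characterization of the generic orbit from the preceding theorem.
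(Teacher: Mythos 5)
Your global strategy is sound and genuinely different from the paper's: you run a Baire-category argument in ${\rm Isom}(\Gamma,{\mathbb Q\mathbb U})$ itself, showing that each $r_n^{-1}(U_n^k)$ is dense open, whereas the paper simply iterates the lifting half of Proposition \ref{coherence prop} to build a coherent chain of generics $\pi_n=\pi_{n+1}|_{\Delta_n}$ and takes the union of the chain. Your version would even yield the stronger conclusion that comeagrely many $\pi$ work. However, your density step contains a genuine gap. After building $\sigma$ from the induced space $X=\Gamma\times_{\Gamma_m}B$ via Theorem \ref{uspenskii}, you assert that $\sigma|_{\Gamma_m}$ contains a $\Gamma_m$-invariant copy of $\sigma_m$ and conclude $\sigma|_{\Gamma_m}\in C_m$. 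What $\sigma|_{\Gamma_m}$ actually contains is a copy of the finite subaction $\sigma_m|_B$ only, not of $\sigma_m$, which acts on all of ${\mathbb Q\mathbb U}$. More importantly, Uspenski\u\i's theorem gives no control whatsoever over $\sigma$ outside the embedded copy of $X$, and the paper supplies no ``back-and-forth uniqueness'' or ``universal extension property'' identifying which representations lie in the orbit $C_m$ (Lemma \ref{comeagre crit} characterises when a generic \emph{exists}, not which points are generic). All that can be said is that $\sigma|_{\Gamma_m}$ is some element of the open set $U(\sigma_m,B)$, and that set is not contained in the single orbit $C_m$: for infinite $\Gamma_m$ it contains representations with infinite orbits, while every member of $C_m$ has only finite orbits. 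So membership in $C_m$ cannot be extracted from agreement with $\sigma_m$ on a finite set.

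The repair is to target the open set $U_n^k$ directly rather than the orbit $C_m$, exactly as in the paper's proof of Proposition \ref{coherence prop}. Choose $m\geq n$ with $S\subseteq\Gamma_m$. The restriction map ${\rm Isom}(\Gamma_m,{\mathbb Q\mathbb U})\til{\rm Isom}(\Gamma_n,{\mathbb Q\mathbb U})$ pulls $U_n^k$ back to a dense open set (this is the first half of the proof of Proposition \ref{coherence prop}), so you may pick $\sigma_m\in U(\pi_0|_{\Gamma_m},A,S)$ with $\sigma_m|_{\Gamma_n}\in U_n^k$ and fix a finite $B_0\subseteq{\mathbb Q\mathbb U}$ with $U(\sigma_m|_{\Gamma_n},B_0)\subseteq U_n^k$. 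By the remark following Theorem \ref{generic}, perturb $\sigma_m$ inside a basic neighbourhood small enough to preserve both the agreement with $\pi_0$ on $A$ for $S$ and membership in $U(\sigma_m|_{\Gamma_n},B_0)$, so as to obtain a finite $\Gamma_m$-invariant set $B\supseteq A\cup B_0$. Your induced-space construction applied to the $\Gamma_m$-action on this $B$ then yields $\sigma\in U(\pi_0,A,S)$ whose restriction to $\Gamma_n$ agrees with $\sigma_m|_{\Gamma_n}$ on $B_0$ and hence lies in $U_n^k$ --- an open condition which, unlike membership in $C_m$, survives the uncontrolled Uspenski\u\i{} extension. With this modification your argument goes through and recovers (indeed strengthens) the theorem.
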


Again, this applies, for example, to the additive group of rational numbers $\Q$.

For some background reading on Polish groups and their actions, we can refer the reader to Kechris' book \cite{kechris}, and for more information on the current research on the Urysohn metric space, the special volume of Topology and its Applications \cite{top} is a good place to start.


\section{Existence of dense and generic representations}
Suppose that $\Gamma$ is a countable group. We wish to characterise when $\Gamma$ has a generic representation on ${\mathbb Q\mathbb U}$. The reason not to study the the case of dense representations is that, as we shall see now, any $\Gamma$ admits a dense representation. 

\begin{prop}
Let $\Gamma$ be a countable group. Then $\Gamma$ has a dense representation on ${\mathbb Q\mathbb U}$.
\end{prop}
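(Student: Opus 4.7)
The plan is to construct a ``universal'' isometric action $\pi\colon\Gamma\curvearrowright{\mathbb Q\mathbb U}$ and then derive the density of its ${\rm Isom}({\mathbb Q\mathbb U})$-orbit from the ultrahomogeneity of ${\mathbb Q\mathbb U}$. Call a triple $(X,S,\mu)$ a \emph{realizable finite pattern} if $X$ is a finite rational metric space, $S\subseteq\Gamma$ is finite, and $\mu\colon S\times X\til X$ is the restriction of some isometric $\Gamma$-action on a rational metric space containing $X$. There are, up to isomorphism, only countably many such patterns.

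First, I would reformulate density combinatorially. A basic open set $U(\sigma,A,S)$ depends on $\sigma$ only through its $S$-partial action on the $S$-closure $A^*=A\cup\{g^\sigma(x)\del g\in S,\, x\in A\}$, and the restriction of $\sigma$ to $A^*$ is a realizable finite pattern. By the ultrahomogeneity of ${\mathbb Q\mathbb U}$, if one can find a finite $B\subseteq{\mathbb Q\mathbb U}$ together with an isometry $\iota\colon A^*\til B$ that intertwines the $S$-partial actions of $\sigma$ and $\pi$, then $\iota\inv$ extends to a full isometry $a\in{\rm Isom}({\mathbb Q\mathbb U})$, and a direct computation shows $a\pi a\inv\in U(\sigma,A,S)$. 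So it suffices to construct $\pi$ into which every realizable finite pattern embeds as a sub-partial-action.

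To produce such a $\pi$, I would enumerate the realizable patterns $(X_n,S_n,\mu_n)$ and use Theorem \ref{uspenskii} to fix, for each $n$, an isometric $\Gamma$-action $(W_n,\tau_n)$ on a countable rational metric space extending $(X_n,\mu_n)$. I would then inductively build a countable rational metric $\Gamma$-space $(Z,\pi_Z)=\bigcup_n(Z_n,\pi_n)$, where $Z_n$ is obtained from $Z_{n-1}$ and $W_n$ as their pushout over a chosen common $\Gamma$-subspace, equipped with the shortest-path metric on the coproduct. A final application of Theorem \ref{uspenskii} embeds $(Z,\pi_Z)$ $\Gamma$-equivariantly and isometrically into $({\mathbb Q\mathbb U},\pi)$, completing the construction.

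The only delicate point is the amalgamation of rational metric $\Gamma$-spaces over a common $\Gamma$-subspace: one must check that the shortest-path pseudo-metric on the coproduct takes only rational values (which follows from attainment of the relevant infima when the shared subspace is finite) and is $\Gamma$-invariant (which is automatic from $\Gamma$-invariance of the two given metrics). This can be arranged by ensuring at each stage that the newly glued-in copy of $W_n$ shares a finite $\Gamma$-subspace with $Z_{n-1}$; alternatively, one may bypass the bookkeeping entirely by appealing to the existence of a ``$\Gamma$-Urysohn'' space, whose construction is essentially implicit in the proof of Theorem \ref{uspenskii}. With the universal $\pi$ in hand, density follows at once from the reduction in the second paragraph.
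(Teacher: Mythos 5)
Your overall reduction is sound and close in spirit to the paper's: a representation $\pi$ has dense orbit iff every ``finite pattern'' realized by some isometric action occurs, up to isometry, inside $\pi$, and your passage from a realized pattern to $a\cdot\pi\in U(\sigma,A,S)$ via ultrahomogeneity is exactly right (the paper packages the same idea as topological transitivity of the ${\rm Isom}({\mathbb Q\mathbb U})$-action plus Baire category, gluing two actions at a time rather than all patterns at once). The gap is in the step you yourself flag as ``the only delicate point'': the amalgamation. The class of isometric $\Gamma$-actions on (rational) metric spaces does \emph{not} have the joint embedding property, so in general there is no action containing equivariant isometric copies of both $Z_{n-1}$ and $W_n$, and no ``$\Gamma$-Urysohn space'' in the naive sense. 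Concretely, take $\Gamma=\Z$, let $W_1$ be $\Z$ with its usual metric and the translation action, and let $W_2$ be a single fixed point. If both embedded equivariantly and isometrically into one action on a space $Z$, the image $p$ of the fixed point would satisfy $d(n\cdot q,p)=d(n\cdot q,n\cdot p)=d(q,p)$ for a basepoint $q$ of the translation orbit, forcing that orbit to be bounded --- a contradiction. Your two proposed remedies do not circumvent this: an action of a countable group need have no finite invariant subspace whatsoever (again, translation on $\Z$), so you cannot ``ensure that $W_n$ shares a finite $\Gamma$-subspace with $Z_{n-1}$''; and the existence of a universal homogeneous $\Gamma$-metric space is not implicit in Theorem \ref{uspenskii} --- it would require precisely the joint embedding property that fails.

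The paper's way out is to observe that a basic open set $U(\sigma,B,T)$ only constrains the action on the finite set $B$, so one may first replace the metric on each ambient copy of ${\mathbb Q\mathbb U}$ by a \emph{bounded} invariant metric that still agrees with $d$ on the relevant finite set: this is the metric $\partial_B$ built from the expanded value set ${\rm Ex}(B)$, imported from \cite{approx}. Once both actions live on bounded spaces, the disjoint union with constant cross-distance $\max\{{\rm diam}(A),{\rm diam}(B)\}$ is an honest invariant metric, and Theorem \ref{uspenskii} applies. Your induction can be repaired in the same way --- truncate the metrics on $Z_{n-1}$ and $W_n$ (keeping $d$ on the finitely many points of the patterns already realized) before each gluing --- but without some such boundedness device the amalgamation step cannot be carried out, and this device is the real content of the proof.
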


\begin{proof}
To show that $\Gamma$ admits a dense representation on ${\mathbb Q\mathbb U}$, by the Baire Category Theorem, it suffices to show that the action of $G={\rm Isom}({\mathbb Q\mathbb U})$\ on ${\rm Isom}(\Gamma,{\mathbb Q\mathbb U})$ is  topologically transitive, i.e., that for any two non-empty open sets $V,W\subseteq {\rm Isom}(\Gamma, {\mathbb Q\mathbb U})$ there is some $g\in G$ such that $g\cdot V\cap W\neq \tom$. 

So let $U(\pi,A,S)$ and $U(\sigma,B,T)$ be basic open neighbourhoods of actions $\pi,\sigma\in {\rm Isom}(\Gamma, {\mathbb Q\mathbb U})$.
We define the expanded value set
$$
{\rm Ex}(A)=\{r_1+\ldots+r_n\del r_i\in d_X[A\times A]\;\&\; r_1+\ldots+r_n\leqslant  {\rm diam}(A)\},
$$
and recall from \cite{approx} that the following defines a $\pi$-invariant metric on ${\mathbb Q\mathbb U}$ that agrees with $d$ on the subset $A$
$$
\partial_A(x,y)=
\begin{cases}
\min (s\in {\rm Ex}(A)\del d(x,y)\leqslant s)        &\text{if $d(x,y)\leqslant {\rm diam}(A)$,}\\
{\rm diam}(A)                                                   &\text{otherwise.}
\end{cases}
$$
In the same manner, we can define the expanded value set ${\rm Ex}(B)$ and a $\sigma$-invariant metric $\partial_B$ on ${\mathbb Q\mathbb U}$. We now let $(X,\partial)$ be the disjoint union of the metric spaces $({{\mathbb Q\mathbb U}},\partial_A)$ and $({{\mathbb Q\mathbb U}},\partial_B)$, 
$$
(X,\partial)=({{\mathbb Q\mathbb U}},\partial_A)\oplus({{\mathbb Q\mathbb U}},\partial_B),
$$
where for $x$ belonging to $({{\mathbb Q\mathbb U}},\partial_A)$ and $y$ belonging to $({{\mathbb Q\mathbb U}},\partial_B)$, we set
$$
\partial(x,y)=\max\big\{{\rm diam}(A), {\rm diam}(B)\big\}.
$$
Note that $\Gamma$ acts by isometries of $(X,\partial)$ by acting via $\pi$ on $({{\mathbb Q\mathbb U}},\partial_A)$ and via $\sigma$ on $({{\mathbb Q\mathbb U}},\partial_B)$. Denote this action by $\pi\oplus\sigma$. By Theorem \ref{uspenskii}, it follows that there is an isometric embedding $\iota\colon (X,\partial)\til ({{\mathbb Q\mathbb U}},d)$ such that for some isometric action $\tau\colon \Gamma\curvearrowright ({{\mathbb Q\mathbb U}},d)$, $\iota$ conjugates the action $\pi\sqcup\sigma$  with $\tau$. Thus, by ultrahomogeneity of ${\mathbb Q\mathbb U}$, there are $g,f\in G$ such that  $g\cdot \tau\in U(\pi,A,S)$ and $f\cdot \tau\in U(\sigma,B,T)$, whence $fg\inv \cdot U(\pi,A,S)\cap U(\sigma,B,T)\neq\tom$.
\end{proof}

Now, to characterise the existence of generic representations, we adapt the results of \cite{ivanov, turbulence,truss} that in various generalities treated the case of representations of $\F_n$ by automorphisms of ultrahomogeneous first order structures. The main difference is that ultrahomogeneity is of relatively little use when considering actions of general countable or finitely generated groups and we must therefore content ourselves with a less finitary characterisation.

We first need the following lemma from \cite{turbulence}. 
\begin{lemme}\label{turbulence} Suppose $G$ is a Polish group acting continuously on a Polish space $X$ and let $x\in X$.  Then
the following are equivalent:
\begin{enumerate}
\item For every neighbourhood of the identity $V\subseteq G$, $V\cdot x$ is comeagre in a
neighbourhood of $x$.
\item For each neighbourhood of the identity $V\subseteq G$, $V\cdot x$ is somewhere dense.
\item The orbit $G\cdot x$ is non-meagre.
\end{enumerate}
\end{lemme}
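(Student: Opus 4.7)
The plan is to establish the cycle $(1) \Rightarrow (2) \Rightarrow (3) \Rightarrow (1)$, with the main technical weight in $(3) \Rightarrow (1)$.

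$(1) \Rightarrow (2)$ is immediate: a comeagre subset of a non-empty open set is dense in that set, so (1) makes $V \cdot x$ somewhere dense.

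For $(3) \Rightarrow (1)$, I will exploit that $G \cdot x$ is a continuous image of the Polish group $G$ and is therefore analytic and has the Baire property. Non-meagreness then forces $G \cdot x$ to be comeagre in some non-empty open $U_0 \subseteq X$. Given $V$, choose a symmetric open $W \ni 1$ with $W \cdot W \subseteq V$ and a countable dense sequence $(g_n) \subseteq G$; then the translates $g_n W$ cover $G$, and $G \cdot x = \bigcup_n (g_n W) \cdot x$ is comeagre in $U_0$, so Baire category produces $n$ for which $(g_n W) \cdot x$ is non-meagre. Translating by the homeomorphism $g_n^{-1}$, the set $W \cdot x$ is itself non-meagre; being analytic with BP, it is therefore comeagre in some non-empty open $U$. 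Taking a witness $y \in W \cdot x \cap U$, $y = wx$ with $w \in W$, and applying the homeomorphism $z \mapsto w^{-1} z$, the set $w^{-1} W \cdot x$ is comeagre in the neighbourhood $w^{-1} U$ of $x$. Since $W$ is symmetric, $w^{-1} W \subseteq W^{-1} W = W^2 \subseteq V$, and so $V \cdot x$ is comeagre in a neighbourhood of $x$.

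$(2) \Rightarrow (3)$ is the delicate step. The same translation idea adapts to density: picking $W$ symmetric with $W^2 \subseteq V$, the set $W \cdot x$ is by (2) dense in some non-empty open $U_W$, and a witness $y = wx \in W \cdot x \cap U_W$ yields that $V \cdot x$ is at least dense in the neighbourhood $w^{-1} U_W$ of $x$. Upgrading density to non-meagreness uses BP once more: were $V \cdot x$ meagre, then the covering $G = \bigcup_n g_n V$ would force $G \cdot x$ meagre too, and applying (2) to a decreasing basis of neighbourhoods of the identity would yield orbit points arbitrarily close to $x$ from outside the meagre union, ultimately conflicting with BP and forcing the open part of the BP representation $V \cdot x = A \triangle M$ to be non-empty, hence $V \cdot x$ non-meagre.

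The principal obstacle throughout will be this last implication, since somewhere-density together with the Baire property does not on its own imply non-meagreness (as $\Q \subseteq \R$ shows); its resolution essentially leverages the group structure together with the uniformity of (2) over all neighbourhoods $V$ of the identity, rather than any one instance of somewhere-density alone.
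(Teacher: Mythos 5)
Your implications (1)$\Rightarrow$(2) and (3)$\Rightarrow$(1) are correct; the latter, which runs directly from non-meagreness of the orbit to local comeagreness via analyticity and the Baire property of $W\cdot x$, is essentially the second half of the paper's own argument for (2)$\Rightarrow$(1), and is fine as written. The genuine gap is in (2)$\Rightarrow$(3), without which your cycle does not close. Your reduction is sound up to the point where you observe that, since $G=\bigcup_n g_nV$, meagreness of $V\cdot x$ would make $G\cdot x$ meagre; so it would indeed suffice to show that (2) forces each $V\cdot x$ to be non-meagre. But what you offer for that is not an argument: producing orbit points arbitrarily close to $x$ cannot conflict with meagreness of $V\cdot x$ (a dense set can be meagre, exactly as your own example $\Q\subseteq\R$ shows), and nothing in your sketch actually forces the open part of the Baire-property decomposition of $V\cdot x$ to be non-empty. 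You correctly diagnose the obstacle; you do not supply the mechanism that overcomes it.

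The missing idea is to run the Baire category argument in the group $G$ rather than in the space $X$, transferring along the orbit map $g\mapsto g\cdot x$. Suppose towards a contradiction that some $U\cdot x$ is meagre, hence covered by closed nowhere dense sets $F_n\subseteq X$. The preimages $K_n=\{g\in G\del g\cdot x\in F_n\}$ are closed by continuity of the action and cover the non-empty open set $U\subseteq G$, so by the Baire category theorem in the Polish group $G$ some $K_n$ has non-empty interior and therefore contains a translate $gV$ of some neighbourhood $V$ of $1$. Then $gV\cdot x\subseteq F_n$, so $V\cdot x\subseteq g^{-1}F_n$ is nowhere dense, contradicting (2). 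This shows that (2) implies every $U\cdot x$ is non-meagre, which yields (3) and, combined with your translation argument, (1). Note that it is precisely here that the uniformity of (2) over \emph{all} neighbourhoods of the identity is exploited, since the $V$ produced by the Baire argument in $G$ is not under your control.
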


 \begin{proof}(1)$\saa$(3) is trivial. Also, for (3)$\saa$(2), suppose
$G\cdot x$ is non-meagre and $V\subseteq G$ is a neighbourhood of $1$.  Then
we can find $g_n\in G$ such that $G=\bigcup_n g_nV$, whence $G\cdot x=\bigcup_ng_nV\cdot x$. So some $g_nV\cdot x$, and therefore also $V\cdot x$, is non-meagre and hence somewhere dense.

Finally, for (2)$\saa$(1), suppose that $V\cdot x$ is somewhere dense for
every neighbourhood $V\subseteq G$ of $1$. Suppose towards a contradiction that for some neighbourhood $U\subseteq G$ of $1$, $U\cdot x$ is meagre, whence there are  
closed nowhere dense sets $F_n\subseteq X$ covering $U\cdot x$. But then the sets
$K_n=\{g\in G\del g\cdot x\in F_n\}$ are closed and cover $U$ and thus,
by the Baire category theorem, some $K_n$ contains a non-empty open
set $gV$, where $V$ is a neighbourhood of $1$ and $g\in G$. So
$gV\cdot x\subseteq F_n$ and $V\cdot x$ must be nowhere dense, which is a
contradiction. 

Now, if $V\subseteq G$ is any neighbourhood of $1$, let $U\subseteq V$ be a smaller neighbourhood such that $U\inv U\subseteq V$. Then $U\cdot x$ is comeagre in some neighbourhood of a point $g\cdot x$, where $g\in U$, and thus $g\inv U\cdot x\subseteq V\cdot x$ is comeagre in a neighbourhood of $x$.
\end{proof}

\begin{lemme}\label{comeagre crit}
Suppose $G$ is a Polish group acting continuously on a Polish space $X$. Then the following are equivalent:
\begin{enumerate}
\item There is a non-meagre orbit $\ku O\subseteq X$.
\item There is a non-empty open set $O\subseteq X$ with the following property:
For all open $\tom\neq V\subseteq O$ and neighbourhood $U\subseteq G$ of $1$, there is a smaller open $\tom\neq W  \subseteq V$ such that the action of $U$ on $W$ is topologically transitive, i.e., for any non-empty open $W_0,W_1\subseteq W$ there is $g\in U$ such that $gW_0\cap W_1\neq \tom$.
\end{enumerate}
Moreover, if $\ku O$ is an orbit comeagre in an open set $O\subseteq X$, then (2) holds for $O$.
\end{lemme}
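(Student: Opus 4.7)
The plan is to establish the \emph{moreover} statement directly, deduce (1)$\saa$(2) as an immediate consequence of it, and then prove (2)$\saa$(1) by a Baire-category argument combined with Lemma \ref{turbulence}.

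\medskip
\noindent\emph{Moreover, and (1)$\saa$(2).} Suppose $\ku O=G\cdot x_0$ is an orbit comeagre in an open set $O\subseteq X$. Given $\tom\neq V\subseteq O$ open and a neighbourhood $U\ni 1$, fix a symmetric $U_0\ni 1$ with $U_0U_0\subseteq U$, and pick $y\in V\cap \ku O$ (possible since $\ku O\cap V$ is comeagre in $V$). As $G\cdot y=\ku O$ is non-meagre, Lemma \ref{turbulence} yields that $U_0\cdot y$ is comeagre in some open neighbourhood $N$ of $y$; set $W=V\cap N$. For any $\tom\neq W_0,W_1\subseteq W$ open, comeagreness of $U_0\cdot y$ in $W$ yields $h_i\in U_0$ with $h_iy\in W_i$, and then $g:=h_1h_0\inv\in U_0U_0\inv\subseteq U$ satisfies $gW_0\cap W_1\ni h_1y$. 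Hence $U$ acts topologically transitively on $W$, which is (2). For the implication (1)$\saa$(2), if $G\cdot x_0$ is non-meagre, then Lemma \ref{turbulence} applied with $V=G$ shows $G\cdot x_0$ is already comeagre in some open neighbourhood of $x_0$, to which the moreover applies.

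\medskip
\noindent\emph{(2)$\saa$(1).} Fix a decreasing countable neighbourhood basis $(U_n)_{n\geq 1}$ at $1\in G$ and set
$$
A_n=\{x\in O\del U_n\cdot x\text{ is somewhere dense in }X\}.
$$
I claim each $A_n$ is comeagre in $O$. Given $\tom\neq V\subseteq O$ open, (2) supplies $\tom\neq W\subseteq V$ open on which $U_n$ acts topologically transitively. Choosing a countable basis $(B_k)$ for $W$, each $U_n\inv B_k\cap W$ is open and, by topological transitivity (take $W_0$ arbitrary and $W_1=B_k$), dense in $W$; hence by Baire a comeagre set of $x\in W$ satisfies that $U_n\cdot x\cap W$ is dense in $W$, so $\ov{U_n\cdot x}\supseteq W$ and in particular $x\in A_n$. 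Thus $A_n$ is comeagre on some sub-open of every $V$. Letting $U^*=\bigcup\{W\subseteq O\text{ open}\del A_n\text{ is comeagre in }W\}$, a countable subcover argument shows $A_n$ is comeagre in $U^*$, while the preceding sentence shows $U^*$ is dense in $O$; since $O\setminus U^*$ is then nowhere dense, $A_n$ is comeagre in all of $O$.

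\medskip
By the Baire Category Theorem, $\snit_n A_n$ is comeagre, hence non-empty, in $O$. For any $x\in\snit_n A_n$ and any neighbourhood $U\ni 1$, $U$ contains some $U_n$, so $U\cdot x\supseteq U_n\cdot x$ is somewhere dense; Lemma \ref{turbulence} then yields that $G\cdot x$ is non-meagre, which is (1). The main obstacle is precisely this (2)$\saa$(1) direction: (2) only provides transitivity of $U_n$ on a small patch depending on both $V$ and $U_n$, whereas one needs a single $x\in O$ simultaneously witnessing somewhere-density of $U_n\cdot x$ for every $n$. This is overcome by first upgrading ``somewhere-comeagre in every open'' to ``comeagre in $O$'' via the $U^*$ argument, and then intersecting countably many such $A_n$.
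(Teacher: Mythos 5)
Your proof is correct and follows essentially the same route as the paper: the forward direction uses Lemma \ref{turbulence} together with the $U_0U_0\inv\subseteq U$ trick to upgrade a locally comeagre partial orbit to topological transitivity, and the converse is the same Baire-category argument, with your sets $A_n$ and the localization via $U^*$ playing the role of the paper's explicit $D_{n,m}$ and $D_m=\bigcup_n D_{n,m}$. The only cosmetic difference is that you derive (1)$\saa$(2) from the ``moreover'' clause by invoking Lemma \ref{turbulence} to get local comeagreness of the orbit, whereas the paper appeals directly to the fact that a non-meagre orbit is comeagre in some open set.
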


\begin{proof}
(1)$\saa$(2): If $\ku O\subseteq X$ is a non-meagre orbit, let $O\subseteq X$ be a non-empty open set in which $\ku O$ is comeagre. Now, if $V\subseteq O$ is non-empty open and $U\subseteq G$ is a neighbourhood of $1$, pick $x\in V\cap \ku O$ and choose an open neighbourhood  $U_0\subseteq U$ of $1$ such that $U_0U_0\inv\subseteq U$.
Then, by the preceding lemma, $U_0\cdot x$ is dense in some open neighbourhood $W\subseteq V$ of $x$ and it follows that the action of $U$ on $W$ is topologically transitive.

(2)$\saa$(1): Suppose $O\subseteq X$ is an open set satisfying the assumption in (2). Fix a neighbourhood basis $\{U_n\}_{n\in \N}$ at $1\in G$ and a basis $\{V_n\}_{n\in \N}$ for the induced topology on $O$ consisting of non-empty open sets. Now, for every $n$ and $m$, let $W_{n,m}\subseteq V_n$ be a non-empty open subset such that the action of $U_m\inv $ on $W_{n,m}$ is topologically transitive. Then $W_m=\bigcup_nW_{n,m}$ is open dense in $O$ since it intersects every $V_n$. Also, for any $V_k\subseteq W_{n,m}$, $W_{n,m}\cap \big(U_m\inv \cdot V_k\big)$ is open dense in $W_{n,m}$, and so 
$$
D_{n,m}=W_{n,m}\cap \bigcap_{V_k\subseteq W_{n,m}} \big(U_m\inv \cdot V_k\big)
$$
is comeagre in $W_{n,m}$. Note also that if $x\in D_{n,m}$, then for any $V_k\subseteq W_{n,m}$, $U_m\cdot x\cap V_k\neq \tom$, showing that $U_m\cdot x$ is  dense in $W_{n,m}$.  We notice that $D_m=\bigcup_nD_{n,m}$ is comeagre in $O$ and that for any $x\in D_m$, $U_m\cdot x$ is somewhere dense. It follows that for any $x$ belonging to the comeagre subset $\bigcap_mD_m\subseteq O$, and for  any $k$, $U_k\cdot x$ is somewhere dense, which by the previous lemma implies that $G\cdot x$ is non-meagre. \end{proof}

Suppose $\Gamma$ is a countable group and set  $G={\rm Isom}({\mathbb Q\mathbb U})$. Since the action of $G$ on ${\rm Isom}(\Gamma,{\mathbb Q\mathbb U})$ has a dense orbit, there is in fact a comeagre set of $\pi \in {\rm Isom}(\Gamma,{\mathbb Q\mathbb U})$ having dense orbits, whence any locally generic $\pi\in {\rm Isom}(\Gamma,{\mathbb Q\mathbb U})$ will be generic. 
So, by Lemma \ref{comeagre crit}, we see that $\Gamma$ admits a generic representation on ${\mathbb Q\mathbb U}$ if and only if the following condition holds:
\begin{quote}
For all finite $A\subseteq {\mathbb Q\mathbb U}$, $R\subseteq \Gamma$ and $\rho\in {\rm Isom}(\Gamma,{\mathbb Q\mathbb U})$, there are finite $A\subseteq B\subseteq {\mathbb Q\mathbb U}$, $R\subseteq S\subseteq \Gamma$ and some $\sigma\in U(\rho,A,R)$ such that  for all finite $B\subseteq C\subseteq {\mathbb Q\mathbb U}$, $S\subseteq T\subseteq \Gamma$ and $\tau,\pi \in U(\sigma,B,S)$ 
$$
G_A\cdot U(\tau,C,T)\cap U(\pi,C,T)\neq \tom.
$$
\end{quote}
Of course, if $\Gamma$ is finitely generated with a fixed finite generating set $S\subseteq \Gamma$ (which is not specified in the notation below), the above criterion simplifies to the following.
\begin{quote}
For all finite $A\subseteq {\mathbb Q\mathbb U}$ and $\rho\in {\rm Isom}(\Gamma,{\mathbb Q\mathbb U})$, there is a finite $A\subseteq B\subseteq {\mathbb Q\mathbb U}$ and some $\sigma\in U(\rho,A)$ such that  for all finite $B\subseteq C\subseteq {\mathbb Q\mathbb U}$ and $\tau,\pi \in U(\sigma,B)$
$$
G_A\cdot U(\tau,C)\cap U(\pi,C)\neq \tom.
$$
\end{quote}

In the following, if $\pi\colon \Gamma\til G$ is a representation and $D\subseteq \Gamma$ is a subset, we set $D^\pi=\{g^\pi\in G\del g\in D\}$. In particular, $\Gamma^\pi$ is a subgroup of $G$.

\begin{lemme}\label{extending finite orbits}
Suppose $\Gamma$ is a finitely generated group with a fixed finite generating set $S\subseteq \Gamma$.  Let $\sigma\in {\rm Isom}(\Gamma,{\mathbb Q\mathbb U})$  and suppose that $B\subseteq {\mathbb Q\mathbb U}$ is a finite  $\Gamma^\sigma$-invariant subset. Then for all $\tau,\pi\in U(\sigma, B)$ and finite $B\subseteq C\subseteq {\mathbb Q\mathbb U}$, 
$$
G_B\cdot U(\tau,C)\cap U(\pi, C)\neq \tom.
$$
\end{lemme}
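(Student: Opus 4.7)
The plan is to amalgamate the two actions $\tau$ and $\pi$ over their common restriction to $B$ inside a single action on ${\mathbb Q\mathbb U}$, and then use the ultrahomogeneity of ${\mathbb Q\mathbb U}$ to produce the required element of $G_B$. First I would check that $B$ is simultaneously $\Gamma^\tau$- and $\Gamma^\pi$-invariant and that the three actions agree on it: since $B$ is $\Gamma^\sigma$-invariant and $\tau,\pi\in U(\sigma,B)$, we have $s^\tau|_B=s^\sigma|_B=s^\pi|_B$ for every $s\in S$, and a straightforward induction on word length in $S$ then gives $g^\tau|_B=g^\sigma|_B=g^\pi|_B$ for every $g\in\Gamma$. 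Next, set $X_\tau=\Gamma^\tau\cdot C$ and $X_\pi=\Gamma^\pi\cdot C$, viewed as $\Gamma$-invariant rational metric subspaces of ${\mathbb Q\mathbb U}$, and form the abstract amalgam $X=X_\tau\cup_B X_\pi$, i.e., the disjoint union of $X_\tau$ and $X_\pi$ with the two copies of $B$ identified, equipped with the metric
$$
\partial(x,y)=\min_{b\in B}\big(d(x,b)+d(b,y)\big),\qquad x\in X_\tau,\; y\in X_\pi,
$$
and with the original metrics inside each piece. A routine verification shows that $\partial$ is a rational metric extending $d_{X_\tau}$ and $d_{X_\pi}$, and that $\tau|_{X_\tau}\cup\pi|_{X_\pi}$ defines an isometric action of $\Gamma$ on $X$; the only non-trivial point, namely invariance of $\partial$ on mixed pairs, is handled by the substitution $b\mapsto g^\tau(b)=g^\pi(b)$ coming from step one.

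Applying Theorem \ref{uspenskii} to this action yields some $\rho\in{\rm Isom}(\Gamma,{\mathbb Q\mathbb U})$ together with a $\Gamma$-equivariant isometric embedding $\iota\colon X\to{\mathbb Q\mathbb U}$. After composing $\iota$ with a suitable element of ${\rm Isom}({\mathbb Q\mathbb U})$ furnished by ultrahomogeneity of ${\mathbb Q\mathbb U}$, and simultaneously conjugating $\rho$ by that element, we may assume that $\iota|_B=\mathrm{id}_B$. The restriction of $\iota$ to the finite set $F_\tau=C\cup\{s^\tau(c):s\in S,\,c\in C\}\subseteq X_\tau$ is then a partial isometry of ${\mathbb Q\mathbb U}$ fixing $B$ pointwise, so a further appeal to ultrahomogeneity produces $h_\tau\in G_B$ with $h_\tau\iota(x)=x$ for every $x\in F_\tau$; symmetrically, one finds $h_\pi\in G_B$ for the analogous finite set $F_\pi\subseteq X_\pi$.

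Finally, the intertwining identities $s^\rho\circ\iota=\iota\circ s^\tau$ on $X_\tau$ and $s^\rho\circ\iota=\iota\circ s^\pi$ on $X_\pi$ give, for $s\in S$ and $c\in C$, the computation $h_\tau s^\rho h_\tau^{-1}(c)=h_\tau s^\rho\iota(c)=h_\tau\iota(s^\tau(c))=s^\tau(c)$, using that $c$ and $s^\tau(c)$ both belong to $F_\tau$; this shows $h_\tau\rho h_\tau^{-1}\in U(\tau,C)$, and analogously $h_\pi\rho h_\pi^{-1}\in U(\pi,C)$. Since $h_\pi\rho h_\pi^{-1}=(h_\pi h_\tau^{-1})(h_\tau\rho h_\tau^{-1})(h_\pi h_\tau^{-1})^{-1}$ and $g:=h_\pi h_\tau^{-1}\in G_B$, we conclude that $h_\pi\rho h_\pi^{-1}$ lies in $g\cdot U(\tau,C)\cap U(\pi,C)$, which is therefore non-empty. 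The only genuinely delicate step is setting up the amalgam correctly: once the equality $g^\tau|_B=g^\pi|_B$ from step one is available, everything else reduces to routine bookkeeping and standard applications of ultrahomogeneity.
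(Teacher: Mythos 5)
Your proposal is correct and follows essentially the same route as the paper's proof: free amalgamation of $\Gamma^\tau\cdot C$ and $\Gamma^\pi\cdot C$ over the common invariant set $B$ with the metric $\min_{b\in B}\big(d(x,b)+d(b,y)\big)$, an appeal to Uspenski\u\i's theorem to realise the amalgamated action inside ${\mathbb Q\mathbb U}$, and ultrahomogeneity applied to the finite sets $C\cup S^\tau\cdot C$ and $C\cup S^\pi\cdot C$ to produce the two elements of $G_B$. Your final paragraph just makes explicit the conjugation bookkeeping that the paper leaves implicit.
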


\begin{proof}
Note that, since $\tau,\pi\in U(\sigma, B)$,  for every generator $g\in S$ and every $x\in B$, we have $g^\tau(x)=g^\pi(x)=g^\sigma(x)\in B$. So it follows that $B$ is invariant under both $\tau$ and $\pi$. We set $X=\Gamma^\tau\cdot C$ and $Y=\Gamma^\pi\cdot C$ and define a pseudometric $\partial$ on the disjoint union $X\sqcup Y$ by letting $\partial$ agree with the metric $d$ on $X$ and $Y$ separately and for $x\in X$ and $y\in Y$ setting
$$
\partial(x,y)=\min_{z\in B}\big (d(x,z)+d(z,y)\big).
$$
We denote by $X\sqcup_\partial Y$ the metric space obtained from $X\sqcup Y$ by identifying points of distance $0$. Thus, $X\sqcup_\partial Y$ is obtained by freely amalgamating $X$ and $Y$ over the common subspace $B$ and we can therefore see $X$ and $Y$ as subspaces of $X\sqcup_\partial Y$ that intersect exactly in their common copy of $B$. We now let $\rho$ be the action of $\Gamma$ on $X\sqcup_\partial Y$ defined by setting 
$$
g^\rho(x)=\begin{cases}
g^\sigma(x)& \text{ if $x\in B$;}\\
g^\tau(x)& \text{ if $x\in X$;}\\ 
g^\pi(x)& \text{ if $x\in Y$.}
\end{cases}
$$
Then $\rho$ is easily seen to be an action by isometries extending the action $\tau$ on $X$ and the action $\pi$ on $Y$. By Theorem \ref{uspenskii}, there is an isometric embedding $\iota\colon X\sqcup_\partial Y\til {\mathbb Q\mathbb U}$ and an isometric action $\rho_0\colon \Gamma\curvearrowright {\mathbb Q\mathbb U}$ such that $\iota$ conjugates $\rho$ with $\rho_0$. Conjugating with an element of $G$, we can suppose that $\iota$ is the identity on $B$.   Now, since $S\subseteq \Gamma$ is finite, so are $C\cup S^\tau\cdot C\subseteq X$ and $C\cup S^\pi\cdot C\subseteq Y$, and thus, by ultrahomogeneity of ${\mathbb Q\mathbb U}$, we can find isometries $f,h\in G_B$ such that $f\cdot \rho_0\in U(\tau,C)$ and $h\cdot \rho_0\in U(\pi,C)$, whence $G_B\cdot U(\tau,C)\cap U(\pi, C)\neq \tom$.
\end{proof}

\begin{thm}\label{generic}
Let $\Gamma$ be a finitely generated group with property (RZ). Then $\Gamma$ has a generic representation in ${\rm Isom}({\mathbb Q\mathbb U})$.
\end{thm}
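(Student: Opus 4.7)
The plan is to verify the open-set criterion for generic representations established in the discussion preceding Lemma~\ref{extending finite orbits}, using Lemma~\ref{extending finite orbits} itself to discharge the main inequality. Concretely, given any finite $A\subseteq {\mathbb Q\mathbb U}$ and any $\rho\in {\rm Isom}(\Gamma,{\mathbb Q\mathbb U})$, my aim is to produce a finite $B\subseteq {\mathbb Q\mathbb U}$ containing $A$ together with a representation $\sigma\in U(\rho,A)$ such that $B$ is $\Gamma^\sigma$-invariant. Granting this, the theorem follows at once: for any finite $B\subseteq C\subseteq {\mathbb Q\mathbb U}$ and any $\tau,\pi\in U(\sigma,B)$, Lemma~\ref{extending finite orbits} gives $G_B\cdot U(\tau,C)\cap U(\pi,C)\neq \tom$, and since $A\subseteq B$ we have $G_B\subseteq G_A$, whence $G_A\cdot U(\tau,C)\cap U(\pi,C)\neq\tom$, which is exactly the criterion.

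To construct the pair $(\sigma,B)$, I would begin with the finite metric subspace $\tilde A=A\cup S^\rho\cdot A\subseteq {\mathbb Q\mathbb U}$ equipped with the partial $\Gamma$-action $x\mapsto g^\rho(x)$ for $x\in A$ and $g$ in the fixed finite generating set $S\subseteq \Gamma$. The substantive step is to promote this partial action on $\tilde A$ to a full action of $\Gamma$ by isometries on some finite rational metric superspace $(B_0,d_{B_0})\supseteq \tilde A$. This is precisely the finite approximability property studied in the companion paper \cite{approx}, where it is shown that the finitely generated groups for which every partial action of the above type extends to a full action on a finite metric space are exactly those with property (RZ). Since $\Gamma$ is assumed to have (RZ), such an extension $(B_0,d_{B_0})$ exists.

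Once the finite $\Gamma$-metric space $(B_0,d_{B_0})$ has been constructed, Theorem~\ref{uspenskii} supplies an isometric embedding $\iota\colon (B_0,d_{B_0})\til {\mathbb Q\mathbb U}$ and an isometric action $\sigma\colon \Gamma\curvearrowright {\mathbb Q\mathbb U}$ such that $\iota$ conjugates the action on $B_0$ with $\sigma$. By ultrahomogeneity of ${\mathbb Q\mathbb U}$, after composing with a global element of $G$ we may take $\iota$ to restrict to the identity on $\tilde A$. Then for every $x\in A$ and $g\in S$ we have $g^\sigma(x)=\iota(g\cdot x)=g^\rho(x)$, so $\sigma\in U(\rho,A)$, and $B:=\iota(B_0)$ is the required finite $\Gamma^\sigma$-invariant superset of $A$.

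The hard part will be the appeal to \cite{approx}: the claim that property (RZ) is strong enough to extend a finite partial isometric $\Gamma$-action on a rational metric space to a full $\Gamma$-action on a finite rational metric superspace. This requires a refinement of Solecki's Theorem~\ref{solecki} in which the isometries freely adjoined to extend the partial isometries of $\tilde A$ can be assembled into a coherent action of $\Gamma$, and it is here that the closedness in the profinite topology on $\Gamma$ of products $H_1\cdots H_n$ of finitely generated subgroups enters in an essential way.
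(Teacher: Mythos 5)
Your proposal is correct and follows essentially the same route as the paper's own proof: reduce via Lemma~\ref{extending finite orbits} to producing $\sigma\in U(\rho,A)$ with a finite $\Gamma^\sigma$-invariant set $B\supseteq A$, obtain this from the main result of the companion paper (property (RZ) implies every such partial action on $A\cup S^\rho\cdot A$ extends to a full action on a finite rational metric space), and then realise it inside ${\mathbb Q\mathbb U}$ via Theorem~\ref{uspenskii} and ultrahomogeneity. The appeal to \cite{approx} is exactly how the paper discharges the role of property (RZ) as well.
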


\begin{proof}
By Lemma \ref{extending finite orbits}, to verify the criterion for existence of generic representations, it suffices that for every $\rho\in {\rm Isom}(\Gamma,{\mathbb Q\mathbb U})$ and finite $A\subseteq {\mathbb Q\mathbb U}$, there is $\sigma\in U(\rho,A)$ and a finite $\Gamma^\sigma$-invariant subset $A\subseteq B\subseteq {\mathbb Q\mathbb U}$. Let $S$ be the fixed set of generators of $\Gamma$. Now, by the main result of \cite{approx}, given $A$  and $\rho$, there is is a finite rational metric space $Y$ containing $A \cup S^\rho\cdot A$ and an isometric action $\pi\colon \Gamma\curvearrowright Y$ such that for all $g\in S$  and all $x\in A$, $g^\pi(x)=g^\rho(x)$. Now, by Theorem \ref{uspenskii} and the ultrahomogeneity of ${\mathbb Q\mathbb U}$, we can suppose that actually $A\cup S^\rho\cdot A\subseteq Y\subseteq {\mathbb Q\mathbb U}$ and that the action $\pi$ extends to an action $\sigma\colon \Gamma\curvearrowright {\mathbb Q\mathbb U}$. Letting $B=Y$, we have the result.
\end{proof}

We note that the proof above establishes the following important property, which we shall be using again. For any $\rho\in {\rm Isom}(\Gamma,{\mathbb Q\mathbb U})$ and finite $A\subseteq {\mathbb Q\mathbb U}$, there is $\sigma\in U(\rho,A)$ with a finite $\Gamma^\sigma$-invariant subset $A\subseteq B\subseteq {\mathbb Q\mathbb U}$.

Conjugacy  is possibly the finest notion of similarity that can be imposed on representations of a countable group $\Gamma$ in a topological group $G$. Thus, the existence of generic representations is similarly a very strong requirement that seldom holds outside of automorphism groups of first order structures. We shall now consider a much coarser notion, which can be expected to hold more generally. 

 Note that, in general, $\Gamma^\pi$ is not closed in $G$, but is still a topological group in the induced  topology from $G$.
\begin{defi}
Two representations $\pi$ and $\tau$ of a countable group $\Gamma$ in a topological group 
$G$ are said to be {\em topologically similar} if ${\rm ker}\;\pi={\rm ker}\;\tau$ and the map
$$
g^\pi\in \Gamma^\pi\mapsto g^\tau\in \Gamma^\tau
$$
is an isomorphism of topological groups.
\end{defi}
Now, since a homomorphism between topological groups is continuous if it is continuous at the identity, we see that two faithful representations $\pi, \tau$ of $\Gamma$ in $G$ are topologically similar if and only if for any net $(g_i)$ in $\Gamma$, 
$$
g_i^\pi\Lim{i\til \infty}1\;\equi\; g^\tau_i\Lim{i\til \infty}1.
$$
So if $G$ is metrisable, let $\ku B$ be a countable neighbourhood basis at the identity. Then topological similarity of $\pi$ and $\tau$ is given by
$$
\a U\in \ku B\;\; \e V\in \ku B\;\;\a g\in \Gamma\; \;\big[\big ( g^\tau\in \overline U\text{ or }g^\pi\notin  V\big )\;\&\; \big ( g^\pi\in \overline U\text{ or }g^\tau\notin  V\big)\big],
$$
showing that topological similarity is an $F_{\sigma\delta}$ equivalence relation.
Obviously, the conjugacy relation refines topological similarity.

Letting $\F_\infty$ denote the free group on countably many generators $a_1,a_2,\ldots$, we have the following result, which indicates that one should expect few generic representations of non-finitely generated countable groups. 
\begin{prop}
Let $G$ be a non-trivial Polish group. Then topological similarity classes in ${\rm Rep}(\F_\infty,G)$ are meagre.
\end{prop}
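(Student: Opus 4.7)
Fix $\pi \in {\rm Rep}(\F_\infty,G)=G^\omega$ and let $[\pi]$ denote its topological similarity class. The plan is to show $[\pi]$ is meagre by analysing the sequence $(\pi(a_n))_{n<\omega}$ of images of the free generators of $\F_\infty$. The key input is the characterisation of topological similarity displayed before the proposition: since $\ker\pi=\ker\tau$ forces both representations to factor through the same quotient, $\tau$ topologically similar to $\pi$ implies $\pi(g_k)\to 1\iff \tau(g_k)\to 1$ for every sequence $g_k$ in $\F_\infty$. I then split on whether $1\in G$ is a cluster point of $(\pi(a_n))$.

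\textbf{Case 1.} Suppose some subsequence $\pi(a_{n_k})\to 1$ in $G$. Then $[\pi]\subseteq\{\tau:\tau(a_{n_k})\to 1\}$. Since $G$ is Hausdorff and non-trivial, I can fix an open $V\ni 1$ with $V\neq G$; this containing set is then contained in the $F_\sigma$-set $\bigcup_N F_{V,N}$, where $F_{V,N}=\{\tau:\tau(a_{n_k})\in V\text{ for all }k\geq N\}$. I claim each closed $F_{V,N}$ is nowhere dense in $G^\omega$: in any basic open set restricting $\tau$ on a finite set of generators, the infinitude of $(n_k)$ yields some $k\geq N$ with $n_k$ outside this restriction, and the free coordinate $\tau(a_{n_k})$ can be perturbed into $G\setminus V$.

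\textbf{Case 2.} Otherwise, fix an open $V_0\ni 1$ with $\{n:\pi(a_n)\in V_0\}$ finite. For $\tau\in[\pi]$, the sequence $(\tau(a_n))$ cannot cluster at $1$ either: extracting a subsequence $\tau(a_{n_k})\to 1$ and pulling back through topological similarity would give $\pi(a_{n_k})\to 1$, so $\pi(a_{n_k})\in V_0$ eventually, contradicting the choice of $V_0$. Consequently,
$$
[\pi]\;\subseteq\;\bigcup_{V\in\ku B}\bigcup_{N}\big\{\tau\in G^\omega:\tau(a_n)\notin V\text{ for all }n\geq N\big\},
$$
where $\ku B$ is a countable basis of open neighbourhoods of $1$. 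The same perturbation argument shows each closed piece is nowhere dense: in any basic open set, pick $n\geq N$ outside the finite restriction and set $\tau(a_n)=1\in V$.

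\textbf{Main obstacle.} The only genuine content is identifying the right dichotomy; once it is in place, both cases reduce to the fact that $\F_\infty$ supplies infinitely many free coordinates, so any finite restriction imposed by a basic open set still leaves an infinite supply of coordinates to perturb. This is precisely the flexibility absent in the finitely generated setting of Theorem \ref{generic}, which explains why meagreness of topological similarity classes is peculiar to infinitely generated groups such as $\F_\infty$.
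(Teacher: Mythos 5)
Your argument is correct in substance, but it is organised differently from the paper's. The paper makes no case distinction: for each infinite $S\subseteq\N$ it introduces the similarity-invariant set $\A(S)$ of tuples admitting a subsequence from $S$ converging to $1$, observes that each $\A(S)$ is dense $G_\delta$ (again by perturbing free coordinates), and concludes that a non-meagre similarity class would have to lie in $\bigcap_S\A(S)=\{(g_n)\del g_n\to 1\}$, which is meagre. Your proof instead fixes a class and runs a dichotomy on whether $1$ is a cluster point of $(\pi(a_n))$, exhibiting an explicit meagre superset in each case; this is more hands-on and avoids the invariance-plus-comeagreness step, at the cost of proving two meagreness statements (that convergence of a \emph{fixed} subsequence to $1$ is meagre, and that failure of $1$ to be a cluster point is meagre) where the paper needs only one comeagreness statement. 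Both rest on the same two pillars: topological similarity preserves which sequences of group elements have images converging to $1$, and any condition constraining infinitely many free coordinates of $G^\N$ can be defeated by perturbing a coordinate outside any given finite support. One small repair in your Case 1: with $V$ merely open, the set $F_{V,N}$ is not closed (membership in $V$ is an open condition), and perturbing a coordinate into the closed set $G\setminus V$ only shows empty interior, not nowhere density. Choose instead $V\ni 1$ with $\overline V\neq G$ (possible since $G$ is a non-trivial metrisable group) and note that $F_{V,N}$ is contained in the genuinely closed set $\{\tau\del \tau(a_{n_k})\in\overline V \text{ for all }k\geq N\}$, whose complement contains, inside every basic open set, a non-empty open subset obtained by moving a free coordinate into the open set $G\setminus\overline V$. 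Your closing remark contrasting with Theorem \ref{generic} is a little loose --- that theorem concerns conjugacy of representations of finitely generated RZ groups, not topological similarity --- but it does not affect the proof.
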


\begin{proof}
Notice first that ${\rm Rep}(\F_\infty,G)$ can be identified with $G^\N$ by sending any $\pi\in {\rm Rep}(\F_\infty,G)$ to the sequence $(a_n^\pi)\in G^\N$. So if $\pi=(g_n)\in G^\N$ and $\sigma=(f_n)\in G^\N$ are topologically similar, then for any sequence $(n_k)$ of natural numbers, 
$$
g_{n_k}\Lim{k\til \infty}1\;\equi\; f_{n_k}\Lim{k\til \infty}1.
$$
Now, for any infinite set $S\subseteq \N$, the set
$$
\A(S)=\{(g_n)\in G^\N\del \e (n_k)\subseteq S\;\; g_{n_k}\Lim{k\til \infty}1\}
=\bigcap_m\big\{(g_n)\in G^\N\del \e s\in S\; d(g_s,1)<\frac 1m\big\} 
$$
is dense $G_\delta$ in $G^\N$ and is invariant under topological similarity. Thus, if $C\subseteq G^\N$ were a non-meagre topological similarity class, we would have 
$$
C\subseteq \bigcap_{S\subseteq \N\text{ infinite}} \A(S)=\big\{(g_n)\in G^\N\del g_n\Lim{n\til\infty}1\big\},
$$
contradicting that $\big\{(g_n)\in G^\N\del g_n\Lim{n\til\infty}1\big\}$ is meagre in $G^\N$.
\end{proof}


\section{Coherence properties of generic representations}
Suppose $G$ is a Polish group and $\Gamma$ is a countable group
generated by two subgroups $\Delta$ and $\Lambda$. Then the mapping $\pi\mapsto (\pi|_\Delta,\pi|_\Lambda)$ identifies 
${\rm Rep}(\Gamma,G)$ with a subset of
${\rm Rep}(\Delta,G)\times{\rm Rep}(\Lambda,G)$,
and, by the reasoning of Section \ref{rep}, the image of
${\rm Rep}(\Gamma,G)$ is closed in
${\rm Rep}(\Delta,G)\times{\rm Rep}(\Lambda,G)$. 

The following result shows that, though
${\rm Rep}(\Gamma,G)$ is not in general equal to the product
space ${\rm Rep}(\Delta,G)\times{\rm Rep}(\Lambda,G)$, we
still have a version of the  Kuratowski--Ulam Theorem (see  (8.41) in \cite{kechris}). We recall that if $A\subseteq Y\times Z$ is a subset of a product space, we let $A_y=\{z\in Z\del (y,z)\in
A\}$.  Also, if $P$ is a property of points in a Polish space $X$, we write $\a^* x\in X \;\; P(x)$ if $P$ holds on a comeagre set of $x\in X$.

\begin{thm}\label{cor coherence}
Suppose $G$ is a Polish group and $\Gamma$ a countable group
generated by two subgroups $\Delta$ and  $\Lambda$. 
Suppose there is a generic $(\rho_0,\sigma_0)\in {\rm Rep}(\Gamma,G)$
such that $\rho_0$ is generic in ${\rm Rep}(\Delta,G)$. Then
$$
\a^* \rho\in {\rm Rep}(\Delta,G)\;\a^* \sigma\in{\rm Rep}(\Gamma,G)_\rho\quad(\rho,\sigma) \textrm{ is
generic in  } {\rm Rep}(\Gamma,G).
$$
\end{thm}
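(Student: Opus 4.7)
The plan is a Kuratowski--Ulam-style argument, made delicate by the fact that $Z={\rm Rep}(\Gamma,G)$ is in general a proper closed subset of $X\times Y={\rm Rep}(\Delta,G)\times{\rm Rep}(\Lambda,G)$. Let $\mathcal O=G\cdot(\rho_0,\sigma_0)\subseteq Z$ be the comeagre orbit guaranteed by the hypothesis; since any two comeagre orbits in a Polish $G$-space would have to intersect, $\mathcal O$ is the unique comeagre orbit in $Z$, so a pair $(\rho,\sigma)\in Z$ is generic precisely when $(\rho,\sigma)\in\mathcal O$. The projection $p\colon Z\to X$ is $G$-equivariant and sends $\mathcal O$ onto $G\cdot\rho_0$, which is comeagre in $X$ by assumption.

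First I reduce the conclusion to a single fiberwise assertion. Set $H=\mathrm{Stab}_G(\rho_0)$, a closed (hence Polish) subgroup of $G$. For $\rho=g\cdot\rho_0\in G\cdot\rho_0$ one computes directly that $Z_\rho=g\cdot Z_{\rho_0}$ and $\mathcal O_\rho=g\cdot(H\cdot\sigma_0)$, so the statement ``$\mathcal O_\rho$ is comeagre in $Z_\rho$'' is transported along the orbit $G\cdot\rho_0$ by the homeomorphism action of $g$ and is equivalent to the single statement that $H\cdot\sigma_0$ is comeagre in $Z_{\rho_0}$. Since $G\cdot\rho_0$ is comeagre in $X$, establishing this fiberwise assertion yields the theorem.

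The crux is therefore to show $H\cdot\sigma_0$ is comeagre in $Z_{\rho_0}$ under the continuous $H$-action. By the remark following Definition 1.1, it suffices to check that $H\cdot\sigma_0$ is both dense and non-meagre in $Z_{\rho_0}$. For \emph{density}, fix any open $B\subseteq Y$ meeting $Z_{\rho_0}$ at some $\sigma_*$. By continuity of the $G$-action at $(1,\sigma_*)$, choose a symmetric open neighbourhood $V\ni 1$ in $G$ and an open $B_0\ni\sigma_*$ with $V\cdot B_0\subseteq B$. Since $\rho_0$ has comeagre (hence non-meagre) $G$-orbit in $X$, Effros's theorem makes the orbit map $g\mapsto g\cdot\rho_0$ open onto $G\cdot\rho_0$, so there is an open $A\ni\rho_0$ in $X$ with $A\cap G\cdot\rho_0=V\cdot\rho_0$. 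The basic open set $Z\cap(A\times B_0)$ contains $(\rho_0,\sigma_*)$, so by density of $\mathcal O$ in $Z$ there is $g\in G$ with $g\cdot\rho_0\in V\cdot\rho_0$ and $g\cdot\sigma_0\in B_0$. Writing $g=vh$ with $v\in V$, $h\in H$, one obtains $h\cdot\sigma_0=v^{-1}\cdot(g\cdot\sigma_0)\in V\cdot B_0\subseteq B$. For \emph{non-meagreness}, by Lemma \ref{turbulence} it suffices to show that for every neighbourhood $U'$ of $1$ in $H$, $U'\cdot\sigma_0$ is somewhere dense in $Z_{\rho_0}$. Choose an open neighbourhood $U''$ of $1$ in $G$ with $V^{-1}U''\cap H\subseteq U'$ (possible by picking $U''\subseteq V$ for $V$ symmetric with $V^2$ inside an open $U_1\ni 1$ satisfying $U_1\cap H\subseteq U'$), and apply the same Effros-plus-decomposition argument to a basic open neighbourhood of $(\rho_0,\sigma_0)$ in which $U''\cdot(\rho_0,\sigma_0)$ is comeagre (by Lemma \ref{turbulence} applied to the non-meagre orbit $\mathcal O$).

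The main obstacle is precisely this transfer from $G$-topological data on $Z$ (where group elements freely perturb both coordinates) to $H$-topological data on the fiber $Z_{\rho_0}$ (where the $\Delta$-coordinate must remain fixed at $\rho_0$). The essential tool is the Effros openness of $g\mapsto g\cdot\rho_0$, which converts local movement of $\rho_0$ into the decomposition $g=vh$ with $v$ in a prescribed neighbourhood of $1$ and $h\in H$, so that continuity of the $G$-action on $Y$ controls the residual perturbation on the $\Lambda$-coordinate. This is exactly where the assumption that $\rho_0$ is generic in ${\rm Rep}(\Delta,G)$ is used essentially; without it the projection $p\colon Z\to X$ need not be open and no such correction could be made.
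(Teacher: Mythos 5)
Your argument is correct, but it follows a genuinely different route from the paper's proof. The paper passes to the Polish spaces $Y=G\cdot\rho_0$ (Polish by the Marker--Sami theorem that non-meagre orbits are $G_\delta$) and $X=\big(G\cdot\rho_0\times{\rm Rep}(\Lambda,G)\big)\cap{\rm Rep}(\Gamma,G)$, uses Effros' theorem to show that the projection $X\to Y$ is open, and then applies a Kuratowski--Ulam theorem for open continuous surjections (Theorem \ref{kurulam}) to the comeagre set $G\cdot(\rho_0,\sigma_0)$. You instead use equivariance to collapse the family of fibrewise statements to the single assertion that $H\cdot\sigma_0$ is comeagre in $Z_{\rho_0}$ for $H={\rm Stab}_G(\rho_0)$, and prove this directly by establishing density and non-meagreness separately, with Effros' theorem supplying the decomposition $g=vh$ that turns $G$-data on $Z$ into $H$-data on the fibre. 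Both proofs rest on Effros' theorem, but yours avoids the Kuratowski--Ulam machinery and the Marker--Sami fact (the fibre $Z_{\rho_0}$ is closed in ${\rm Rep}(\Lambda,G)$, hence Polish for free), at the cost of more delicate neighbourhood bookkeeping; it also makes explicit that the inner statement holds for \emph{every} $\rho$ in the orbit of $\rho_0$, not merely for a comeagre set of them. Two minor points to tidy up: the principle that a dense, non-meagre orbit is comeagre is stated in the paper only for the spaces ${\rm Rep}(\Gamma,G)$, so you should note that the same proof (Lemma \ref{turbulence} together with invariance and openness of $\bigcup_{h\in H}hU$) applies verbatim to the action of the closed subgroup $H$ on the Polish space $Z_{\rho_0}$; and in the non-meagreness step the neighbourhood witnessing continuity of the action at $(1,\sigma_*)$ must be chosen inside the $V$ already fixed from $U'$ --- your sketch leaves this implicit, but it causes no difficulty.
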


As pointed out by the referee, the original proof of Theorem \ref{cor coherence} amounted to a proof of the following probably well-known variation of the Kuratowski--Ulam Theorem. However, since we do not know of a reference, we include the short proof here.

\begin{thm}\label{kurulam}
Suppose $f\colon X\til Y$ is a surjective, continuous, open function between Polish spaces $X$ and $Y$. Then for any set $A\subseteq X$ with the Baire property, the following are equivalent
\begin{enumerate}
\item $A$ is comeagre,
\item $\a ^* y\in Y\;  \;\; A\cap f\inv(y) \text{ is comeagre in }f\inv (y)$.
\end{enumerate}
\end{thm}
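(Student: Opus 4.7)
The plan is to reduce the theorem to a single key fibering lemma: if $A\subseteq X$ is meagre, then $A\cap f\inv(y)$ is meagre in $f\inv(y)$ for comeagre $y\in Y$. Before proving this, I would note that each fibre $f\inv(y)$ is a closed subspace of the Polish space $X$, hence Polish and in particular a Baire space, so the notion of meagreness inside $f\inv(y)$ is meaningful.

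To prove the key lemma, write $A\subseteq \bigcup_n F_n$ with each $F_n$ closed and nowhere dense in $X$, and fix a countable base $\{V_k\}_{k\in\N}$ for $X$. The set of $y\in Y$ for which $F_n\cap f\inv(y)$ fails to be nowhere dense in $f\inv(y)$ is the union over $k$ of
$$
B_{n,k}=\{y\in Y\del \tom\neq V_k\cap f\inv(y)\subseteq F_n\}=f(V_k)\setminus f(V_k\setminus F_n).
$$
Since $F_n$ is nowhere dense, $V_k\setminus F_n$ is open and dense in $V_k$, and since $f$ is continuous and open, $f(V_k\setminus F_n)$ is open and dense in the open set $f(V_k)\subseteq Y$. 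Hence $B_{n,k}$ has empty interior in $f(V_k)$; using that $f(V_k)$ is open in $Y$ while its topological boundary $\overline{f(V_k)}\setminus f(V_k)$ is nowhere dense in $Y$, one concludes that $B_{n,k}$ itself is nowhere dense in $Y$. Taking countable unions over $n$ and $k$ yields the key lemma.

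Direction (1)$\saa$(2) then follows immediately by applying the key lemma to the meagre set $X\setminus A$. For (2)$\saa$(1), invoke the Baire property of $A$ to write $X\setminus A=V\triangle M$ with $V$ open in $X$ and $M$ meagre, so that $V\cap A\subseteq M$ is meagre. Applying the key lemma to $V\cap A$ and combining with hypothesis (2) applied to $X\setminus A$, one obtains a comeagre set of $y\in Y$ for which both $V\cap A\cap f\inv(y)$ and $(X\setminus A)\cap f\inv(y)$ are meagre in $f\inv(y)$, whence $V\cap f\inv(y)$ is meagre there as well. But if $V\neq\tom$, then openness of $f$ makes $f(V)$ a non-empty open set in $Y$, and for each $y\in f(V)$ the set $V\cap f\inv(y)$ is non-empty and open in the Baire space $f\inv(y)$, hence non-meagre, a contradiction. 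Thus $V=\tom$ and $X\setminus A\subseteq M$ is meagre, so $A$ is comeagre.

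The main obstacle is the final step of the key lemma, namely passing from ``$B_{n,k}$ has empty interior in $f(V_k)$'' to ``$B_{n,k}$ is nowhere dense in $Y$''. This rests on $f$ being open (so $f(V_k)$ is open in $Y$) and on the observation that any open subset of $\overline{B_{n,k}}^{\,Y}$ meeting $Y\setminus f(V_k)$ would have to sit inside the nowhere dense boundary $\overline{f(V_k)}\setminus f(V_k)$.
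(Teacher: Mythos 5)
Your proposal is correct and follows essentially the same route as the paper: your sets $B_{n,k}=f(V_k)\setminus f(V_k\setminus F_n)$ are exactly the complements (with $D_n=X\setminus F_n$) of the sets $f(U_m\cap D_n)\cup\sim\! f(U_m)$ appearing in the paper's forward direction, and your converse direction is the same localisation via the Baire property, merely packaged as two applications of the key lemma to the full map $f$ rather than one application of the forward implication to the restricted open map $f\colon V\til f(V)$. All steps, including the passage from ``empty interior in $f(V_k)$'' to ``nowhere dense in $Y$'' via the boundary of the open set $f(V_k)$, check out.
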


\begin{proof}
To see the implication from (1) to (2), fix a  basis $\{U_n\}_{n\in \N}$ for the topology on $X$ and find dense open sets $D_n\subseteq X$ such that $\bigcap_{n\in \N}D_n\subseteq A$. Then, for any $y\in Y$, 
\[\begin{split}
\big(\bigcap_{n\in \N}D_n\big)\cap f\inv (y) &\text{ is comeagre  in }f\inv (y)\\
&\equi \a n\;\; D_n\cap f\inv (y) \text{ is dense in }f\inv (y)\\
&\equi \a n\;\a m\; \big( f\inv(y)\cap U_m\neq \tom \til  f\inv(y)\cap U_m\cap D_n\neq \tom\big)\\
&\equi  \a n\;\a m\;\;  y\notin f( U_m)\setminus  f(U_m\cap D_n)\\
&\equi y\in \bigcap_{n,m\in \N} \big(  f(U_m\cap D_n)\cup \sim \!\!f( U_m)\big).
\end{split}\]
Now, since $f$ is surjective open, $f(U_m\cap D_n)\cup \sim \!\!f( U_m)$ is dense open in $Y$ for all $n,m$, whence $\bigcap_{n,m\in \N} \big(  f(U_m\cap D_n)\cup \sim\!\! f( U_m)\big)$ is comeagre in $Y$, showing that (1) implies (2).

To see that (2) implies (1), suppose that $A\subseteq X$ is not comeagre in $X$ and find a non-empty open set $V\subseteq X$ such that $V\setminus A$ is comeagre in $V$. Applying (1)$\saa$(2) to the mapping $f\colon V\til f(V)$, we see that 
$$
\a ^* y\in f(V)\;  \;\; \big(V\setminus A\big)\cap f\inv(y)\cap V \text{ is comeagre in }f\inv (y)\cap V,
$$
whence 
$$
\e ^* y\in Y\;  \;\; A\cap f\inv(y) \text{ is not comeagre in }f\inv (y),
$$
which finishes the proof.
\end{proof}

And now we can present the proof of Theorem \ref{cor coherence}.
\begin{proof}
By a result of D. Marker and R. L. Sami (see \cite{becker}), non-meagre orbits are necessarily $G_\delta$. So it follows that both the spaces $X=\big(G\cdot \rho_0\times {\rm Rep}(\Lambda,G)\big)\cap {\rm Rep}(\Gamma,G)$ and $Y=G\cdot  \rho_0$ are Polish. We claim that the $G$-equivariant projection map $\pi(\rho,\sigma)=\rho$ from $X$ to $Y$ is open (it is clearly surjective and continuous). To see this, note that if $U\subseteq X$ is open, then $V=\{g\in G\del \e \sigma\in {\rm Rep}(\Lambda,G)\;\;\; (g\cdot \rho_0,\sigma)\in U\}$ is open too, whence, by Effros' Theorem (see Theorem 2.2.2 in \cite{becker}), $\pi(U)=V\cdot \rho_0$ is open in $Y$.

Now, letting $A=G\cdot (\rho_0,\sigma_0)$, which is comeagre in $X$, we have by Theorem \ref{kurulam} that
$$
\a^* \rho \in Y\;\;\; A\cap \pi\inv(\rho)\text{ is comeagre in }\pi\inv(\rho), 
$$
i.e.,
$$
\a^* \rho \in Y\;\;\;\a^* \sigma \in {\rm Rep}(\Gamma,G)_\rho \;\;\;(\rho,\sigma)\in G\cdot (\rho_0,\sigma_0). 
$$
Since $Y=G\cdot  \rho_0$ is comeagre in ${\rm Rep}(\Delta,G)$, it follows that
$$
\a^* \rho \in {\rm Rep}(\Delta,G)\;\;\;\a^* \sigma \in {\rm Rep}(\Gamma,G)_\rho \;\;\;(\rho,\sigma)\text{ is generic in }{\rm Rep}(\Gamma,G), 
$$
which finishes the proof.
\end{proof}


\section{More on generic representations}
We shall now see how the coherence properties of Theorem \ref{cor coherence} play out in the case of representations in $G={\rm Isom}({\mathbb Q\mathbb U})$. 
\begin{lemme}\label{coherence}
Suppose $\Gamma$ is a group and $\Lambda\leqslant \Gamma$ is a subgroup. Assume that $\pi \colon \Gamma\curvearrowright X$ is an action by isometries on a metric space $X$ and $\sigma\colon \Lambda\curvearrowright Y$ is an action by isometries on a metric space containing $X$ such that $\sigma$ extends the action $\pi|_\Lambda$. Then there is a metric space $Z$ containing $Y$ and an action by isometries $\tau\colon \Gamma\curvearrowright Z$ such that $\tau$ extends $\pi$ and $\tau|_\Lambda$ extends $\sigma$.

Moreover, if $Y$ is a rational metric space and $X$ is finite, then $Z$ can be made a rational metric space.
\end{lemme}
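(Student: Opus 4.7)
The plan is to build $Z$ as a $\Gamma$-induced metric extension of $\sigma$, gluing copies of $Y$ indexed by left cosets of $\Lambda$ in $\Gamma$ along the common subspace $X$ via the action $\pi$. Fix a transversal $T \subseteq \Gamma$ for the left cosets of $\Lambda$ with $1 \in T$, so $\Gamma = \bigsqcup_{t \in T} t\Lambda$. For each $t \in T$ take a disjoint copy of $Y$, and quotient by the identifications $(t, x) \sim (1, t^\pi(x))$ for $x \in X$. Writing $[t, y]$ for equivalence classes, we obtain a set $Z$ which decomposes as $Z = [1, Y] \sqcup \bigsqcup_{t \in T \setminus \{1\}} [t, Y \setminus X]$, with $Y$ embedded as the $t = 1$ layer and $[t, Y] \cap [t', Y] = [1, X]$ for distinct $t, t' \in T$.

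Next I define the $\Gamma$-action $\tau$ by: given $g \in \Gamma$ and $t \in T$, write $gt = t'\lambda$ uniquely with $t' \in T$ and $\lambda \in \Lambda$, and set $g^\tau([t, y]) = [t', \lambda^\sigma(y)]$. A direct verification, using that $\sigma$ extends $\pi|_\Lambda$ (so $\lambda^\sigma(x) = \lambda^\pi(x)$ for $x \in X$), shows that $\tau$ respects the $X$-identifications, is a group action, and extends $\pi$ on $[1, X] = X$ and $\sigma$ on $[1, Y] = Y$. Equip $Z$ with the free-amalgamation metric over $[1, X]$: each $[t, Y]$ carries (the pushforward of) $d_Y$, and for $z \in [t, Y]$, $z' \in [t', Y]$ with $t \neq t'$,
$$
d_Z(z, z') = \inf_{x \in X} \Big(d_{[t, Y]}(z, [1, x]) + d_{[t', Y]}([1, x], z')\Big).
$$
The layer metrics agree on the common subset $[1, X]$ because each $t^\pi$ is an isometry of $X$. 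The action $\tau$ is by isometries, since $g^\tau$ maps $[t, Y]$ isometrically onto $[gt, Y]$ via $\lambda^\sigma \in {\rm Isom}(Y)$ (where $gt = t'\lambda$) and permutes $[1, X]$ via the isometry $g^\pi$, so the infimum above is preserved.

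For the rational case, if $X$ is finite and $d_Y$ is rational-valued, then the infimum is a minimum over the finite set $X$ of sums of pairs of rationals, hence $d_Z$ is rational-valued throughout. The main obstacle is to verify that the amalgamated formula indeed yields a metric (rather than just a pseudo-metric) on $Z$: one must check the triangle inequality for paths crossing through three or more layers, but this follows by the standard free-amalgamation argument---any such zigzag path may be shortened to one passing through $[1, X]$ a single time, using that consecutive intermediate $X$-crossings can be collapsed by the triangle inequality within $[1, Y]$.
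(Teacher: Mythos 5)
Your construction is correct and is essentially the paper's own: the paper builds the same induced space as a pseudometric on $Y\times\Gamma$ with $\Gamma$ acting by left translation, which after identifying points at distance zero collapses to exactly your coset-indexed copies of $Y$ freely amalgamated over $X$ via $\pi$. The only point worth adding is that for infinite $X$ the amalgamated distance can vanish between distinct points in different layers, so in general one must pass to the metric quotient just as the paper does; this does not disturb the isometric copy of $Y$ or the action.
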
 

\begin{proof}
Let $d$ denote the metric on $Y$ and define the following pseudometric $\partial$ on $Y\times \Gamma$.
$$
\partial \big((y_1,g_1),(y_2,g_2)\big)=\begin{cases}
d(g_2\inv g_1\cdot y_1,y_2), &\text{ if $y_1,y_2\in X$ or $g_2\inv g_1\in \Lambda$;}\\
\inf_{x\in X}d(y_1,g_1\inv\cdot x)+d(g_2\inv \cdot x,y_2), &\text{ otherwise.}
\end{cases}
$$
By considering cases and using the easy fact that for all $(y_1,g_1), (y_2,g_2)\in Y\times \Gamma$, 
$$
\partial \big((y_1,g_1),(y_2,g_2)\big)\leqslant \inf_{x\in X}d(y_1,g_1\inv\cdot x)+d(g_2\inv \cdot x,y_2),
$$
one checks that $\partial$ indeed is a pseudometric. 

We now let $\tau\colon \Gamma\curvearrowright Y\times \Gamma$ be the action by left-translation on the second coordinate. This clearly preserves $\partial$. Let now $\sim$ be the equivalence relation on $Y\times \Gamma$ given by 
$$
(y_1,g_1)\sim(y_2,g_2)\;\equi\; \partial\big( (y_1,g_1),(y_2,g_2)\big)=0
$$
and let $[y,g]$ denote the equivalence class of $(y,g)$. Then $\partial$ defines a metric on $Y\times \Gamma/\sim$ and the action $\tau$ factors through to an isometric action of $\Gamma$ on $Y\times \Gamma/\sim$. Also, the map $\iota\colon Y\til Y\times \Gamma/\sim$ defined by $\iota(y)=[y,1]$ is an isometric embedding. 

So to prove the result, it suffices to show that $\iota$ conjugates $\sigma$ with $\tau|_\Lambda$ and that $\iota|_X$ conjugates $\pi$ with $\tau$, whence, by renaming, we can let $Z=Y\times \Gamma/\sim$.

To see this, suppose that either $y\in Y$ and $g\in \Lambda$ or that $y\in X$ and $g\in \Gamma$. Then
$$
\iota(g\cdot y)=[g\cdot y,1]=[y, g]=g\cdot [y,1]=g\cdot \iota(y),
$$
since $\partial\big((g\cdot y,1),(y,g)\big)=d(g\inv1 g\cdot y,y)=0$.

For the moreover part, note that if $d$ takes rational values and $X$ is finite, then also $\partial$ takes rational values.
\end{proof}

\begin{prop}\label{coherence prop}
Suppose $\Gamma$ is a finitely generated group with property (RZ) and $\Lambda\leqslant \Gamma$ is a finitely generated subgroup. Then for any generic representation $\pi\in {\rm Isom}(\Gamma,{\mathbb Q\mathbb U})$, also $\pi|_\Lambda\in {\rm Isom}(\Lambda,{\mathbb Q\mathbb U})$ is generic. It follows that any generic $\sigma\in {\rm Isom}(\Lambda,{\mathbb Q\mathbb U})$ is the restriction of a generic $\pi\in {\rm Isom}(\Gamma,{\mathbb Q\mathbb U})$ to $\Lambda$, i.e., $\sigma=\pi|_\Lambda$.
\end{prop}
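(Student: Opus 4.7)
My plan is to prove the first statement (the second follows by applying it to any generic $\pi_*\in {\rm Isom}(\Gamma,{\mathbb Q\mathbb U})$, supplied by Theorem \ref{generic}, and conjugating by an element of $G={\rm Isom}({\mathbb Q\mathbb U})$ so that $\pi_*|_\Lambda$ equals a given generic $\sigma$). After enlarging $S_\Gamma$ to contain $S_\Lambda$, write $r(\pi)=\pi|_\Lambda$ for the restriction map and fix a generic $\pi$ in ${\rm Isom}(\Gamma,{\mathbb Q\mathbb U})$. The plan reduces to establishing (i) there is a generic $\sigma^*\in{\rm Isom}(\Lambda,{\mathbb Q\mathbb U})$ and (ii) $r^{-1}(G\cdot \sigma^*)$ is comeagre in ${\rm Isom}(\Gamma,{\mathbb Q\mathbb U})$. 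Granting both, the two comeagre sets $G\cdot \pi$ and $r^{-1}(G\cdot \sigma^*)$ intersect, and the $G$-invariance of $G\cdot \sigma^*$ forces $\pi|_\Lambda\in G\cdot \sigma^*$.

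For (i) I verify the criterion from Section 2 for $\Lambda$: given $\rho$ and finite $A\subseteq{\mathbb Q\mathbb U}$, I produce $\sigma\in U(\rho,A)$ with a finite $\Lambda^\sigma$-invariant $B\supseteq A$. First, adapting the construction from the proof of the dense-representation proposition, I truncate $d$ to the $\rho$-invariant rational pseudometric $\partial_{A'}$ (where $A'=A\cup S_\Lambda^\rho\cdot A$) and equip $({\mathbb Q\mathbb U},\partial_{A'})\times \Gamma$ with the $\Gamma$-invariant pseudometric assigning $(y_1,g_1),(y_2,g_2)$ the distance $\partial_{A'}\bigl((g_2^{-1}g_1)^\rho(y_1),y_2\bigr)$ when $g_2^{-1}g_1\in \Lambda$ and the distance ${\rm diam}(A')$ otherwise; after passing to the metric quotient, applying Theorem \ref{uspenskii}, and conjugating via ultrahomogeneity so as to fix $A'$ pointwise, I obtain $\pi_0\in{\rm Isom}(\Gamma,{\mathbb Q\mathbb U})$ with $\pi_0|_\Lambda\in U(\rho,A)$. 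Next, the property displayed after the proof of Theorem \ref{generic} (which invokes property $(RZ)$ of $\Gamma$) supplies $\pi_1\in U(\pi_0,A)$ with a finite $\Gamma^{\pi_1}$-invariant $B\supseteq A$; since $S_\Lambda\subseteq S_\Gamma$, the restriction $\sigma:=\pi_1|_\Lambda$ lies in $U(\rho,A)$ and $B$ is $\Lambda^\sigma$-invariant. The $\Lambda$-analogue of Lemma \ref{extending finite orbits}, whose proof transfers verbatim, then completes the criterion and produces $\sigma^*$.

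For (ii), since non-meagre orbits are $G_\delta$ (the Marker--Sami result recalled in the proof of Theorem \ref{cor coherence}), $r^{-1}(G\cdot \sigma^*)$ is $G_\delta$ by continuity of $r$, so only density remains. Given a basic open $V=U(\pi,A)$, pick $\pi^*\in V$ with a finite $\Gamma^{\pi^*}$-invariant $B^*\supseteq A$. The core subclaim is that $r(U(\pi^*,B^*))$ is dense in the neighbourhood $U(\pi^*|_\Lambda,B^*)$; combined with the comeagreness of $G\cdot \sigma^*$ there, Baire category then forces $r(V)\cap G\cdot\sigma^*\neq \emptyset$, proving density. To establish the subclaim, given $\sigma'\in U(\pi^*|_\Lambda,B^*)$ and finite $E\subseteq {\mathbb Q\mathbb U}$, the $\Lambda$-invariance of $B^*$ forces $\sigma'|_{B^*}=\pi^*|_{B^*}|_\Lambda$, so Lemma \ref{coherence} with $X=B^*$ and $Y={\mathbb Q\mathbb U}$ amalgamates $\pi^*|_{B^*}$ and $\sigma'$ into a $\Gamma$-action $\tau$ on a rational metric space $Z\supseteq {\mathbb Q\mathbb U}$; Theorem \ref{uspenskii} embeds $(Z,\tau)$ into $({\mathbb Q\mathbb U},\pi'')$, and conjugating $\pi''$ by an element of $G$ (supplied by ultrahomogeneity) that fixes $B^*\cup E\cup S_\Lambda^{\sigma'}\cdot E$ pointwise yields $\pi'\in U(\pi^*,B^*)$ with $\pi'|_\Lambda\in U(\sigma',E)$. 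The main obstacle is precisely this density subclaim: one would prefer the stronger assertion that $r$ is open (which would make Theorem \ref{kurulam} immediately applicable), but the embedding $j\colon Z\to {\mathbb Q\mathbb U}$ of Theorem \ref{uspenskii} cannot in general be arranged to restrict to the identity on the copy of ${\mathbb Q\mathbb U}$ sitting inside $Z$, so ultrahomogeneity yields only finite-set agreement---which is fortunately exactly what the topology of ${\rm Isom}(\Lambda,{\mathbb Q\mathbb U})$ needs to deliver density.
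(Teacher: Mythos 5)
Your overall architecture and the technical engine match the paper's: amalgamating $\pi^*|_{B^*}$ with a $\Lambda$-action over the finite invariant set $B^*$ via Lemma \ref{coherence}, then applying Theorem \ref{uspenskii} and ultrahomogeneity to realise the result inside ${\mathbb Q\mathbb U}$ while fixing the relevant finite sets pointwise, is exactly the core of the paper's proof, and your reduction of the second assertion to the first is also the paper's. However, the final step of your part (ii) contains a genuine gap. From the subclaim that $r(U(\pi^*,B^*))$ is \emph{dense} in $U(\pi^*|_\Lambda,B^*)$ and the fact that $G\cdot\sigma^*$ is comeagre there, you conclude that $r(V)$ meets $G\cdot\sigma^*$. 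That inference is invalid: a dense set and a comeagre set need not intersect (consider $\Q$ and $\R\setminus\Q$ in $\R$). Density of the image carries no category information, so nothing you have proved rules out $r(U(\pi^*,B^*))$ lying entirely inside the meagre complement of $G\cdot\sigma^*$. You correctly observe that openness of $r$ is out of reach, but the substitute you chose is too weak to close the argument as written; there is no Baire category argument to run with a single dense set.

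The repair is short, uses only what you have already established, and is precisely how the paper assembles the pieces: instead of pushing open sets forward, pull dense open sets back. Your subclaim shows that for every non-empty open $V\subseteq{\rm Isom}(\Gamma,{\mathbb Q\mathbb U})$ and every dense open $W\subseteq{\rm Isom}(\Lambda,{\mathbb Q\mathbb U})$ one has $r\inv(W)\cap V\neq\tom$: choose $U(\pi^*,B^*)\subseteq V$ as you do, pick a basic open $U(\sigma',E)\subseteq W\cap U(\pi^*|_\Lambda,B^*)$ with $\sigma'\in U(\pi^*|_\Lambda,B^*)$ and $E\supseteq B^*$, and apply the subclaim to get $\pi'\in U(\pi^*,B^*)\subseteq V$ with $\pi'|_\Lambda\in U(\sigma',E)\subseteq W$. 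Hence $r\inv(W)$ is dense, and open by continuity of $r$. Writing the comeagre $G_\delta$ orbit $G\cdot\sigma^*$ as $\bigcap_n V_n$ with each $V_n$ open (hence dense), you obtain that $r\inv(G\cdot\sigma^*)=\bigcap_n r\inv(V_n)$ is comeagre, which is your (ii); the rest of your argument then goes through. Two smaller remarks: your part (i) is a legitimate, if roundabout, route to a generic $\sigma^*$ for $\Lambda$ --- the paper instead takes the $V_n$ for granted, implicitly using that a finitely generated subgroup of an (RZ) group again has (RZ) so that Theorem \ref{generic} applies to $\Lambda$ directly --- and Lemma \ref{extending finite orbits} already applies to $\Lambda$ verbatim, so no separate analogue is needed.
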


\begin{proof}
Fix finite generating sets $S\subseteq T$ for $\Lambda$ and $\Gamma$ respectively. 
Let $p\colon {\rm Isom}(\Gamma,{\mathbb Q\mathbb U})\til {\rm Isom}(\Lambda,{\mathbb Q\mathbb U})$ denote the projection defined by $p(\pi)=\pi|_\Lambda$ and let $V_n\subseteq {\rm Isom}(\Lambda,{\mathbb Q\mathbb U})$  be a sequence of dense open sets  whose intersection is the set of generic representations of $\Lambda$ in ${\rm Isom}({\mathbb Q\mathbb U})$. For the first part, it suffices to show that $p\inv(V_n)$ is dense open in ${\rm Isom}(\Gamma,{\mathbb Q\mathbb U})$, since then $\bigcap_n p\inv (V_n)$ will be comeagre and hence contain a generic.

So suppose $U\subseteq {\rm Isom}(\Gamma,{\mathbb Q\mathbb U})$ is any non-empty open set. By the proof of Theorem \ref{generic}, we can find $\pi\in U$ and a finite $\Gamma^\pi$-invariant subset $X\subseteq {\mathbb Q\mathbb U}$ such that $U(\pi,X)\subseteq U$. Since $V_n$ is dense open in ${\rm Isom}(\Lambda,{\mathbb Q\mathbb U})$, there is a finite set $X\subseteq B\subseteq {\mathbb Q\mathbb U}$ and some $\sigma\in U(\pi|_\Lambda,X)$ such that $U(\sigma, B)\subseteq V_n$. Set $Y=\Lambda^{\sigma}\cdot B\subseteq {\mathbb Q\mathbb U}$. By Lemma \ref{coherence}, we can find a countable rational metric space $Z$ containing $Y$ and an action by isometries $\tau\colon \Gamma\curvearrowright Z$ such that $\tau$ extends the action $\pi\colon \Gamma\curvearrowright X$ and $\tau|_\Lambda$ extends the action $\sigma\colon \Lambda\curvearrowright Y$. Now, by Theorem \ref{uspenskii} and the ultrahomogeneity of ${\mathbb Q\mathbb U}$, there is an isometric injection $\iota\colon Z\til {\mathbb Q\mathbb U}$, which is the identity on $B\cup S^{\tau|_\Lambda}\cdot B$ and an action $\rho\in {\rm Isom}(\Gamma,{\mathbb Q\mathbb U})$ such that $\iota$ conjugates $\tau$ with $\rho$. We thus see that 
$$
p(\rho)=\rho|_\Lambda\in U(\sigma,B)\subseteq V_n,
$$
while
$$
\rho\in U(\pi,X)\subseteq U.
$$
It follows that $p\inv (V_n)\cap U\neq \tom$, showing that $p\inv (V_n)$ is dense open.

Now, for the second part, suppose $\sigma\in {\rm Isom}(\Lambda,{\mathbb Q\mathbb U})$ is generic. Let $\tau\in {\rm Isom}(\Gamma,{\mathbb Q\mathbb U})$ be any generic, whereby also $\tau|_\Lambda$ is generic. It follows that for some $g\in {\rm Isom}({\mathbb Q\mathbb U})$, $\sigma=g\cdot (\tau|_\Lambda)=(g\cdot \tau)|_\Lambda$. Since also $g\cdot \tau$ is generic, we see that $\sigma$ is the restriction of a generic $\pi=g\cdot\tau\in {\rm Isom}(\Gamma, {\mathbb Q\mathbb U})$ to $\Lambda$.
\end{proof}
Under the assumptions of Proposition \ref{coherence prop}, we see that if $\ku O\subseteq {\rm Isom}(\Lambda,{\mathbb Q\mathbb U})$ is the set of generic representations of $\Lambda$ and $\ku C\subseteq {\rm Isom}(\Gamma,{\mathbb Q\mathbb U})$ is the set of generic representations of $\Gamma$, then $\ku O=\ku C|_\Lambda$.

Assume $\Gamma$ is a group with property (RZ), generated by finitely generated subgroups $\Lambda$ and $\Delta$. By Proposition \ref{coherence prop}, if $\pi\colon \Gamma\curvearrowright {\mathbb Q\mathbb U}$ is generic, then also $\pi|_\Lambda$ and $\pi|_\Delta$ are generic. But conversely, by Theorem \ref{cor coherence}, we have the following. Suppose $\rho\colon \Lambda\curvearrowright {\mathbb Q\mathbb U}$ is a generic representation and let $X_\rho\subseteq {\rm Isom}(\Delta,{\mathbb Q\mathbb U})$ denote the closed set of $\sigma\colon \Delta\curvearrowright {\mathbb Q\mathbb U}$ such that $\rho$ and $\sigma$ are restrictions of the same $\pi\colon \Gamma\curvearrowright {\mathbb Q\mathbb U}$. Let also $\ku O\subseteq{\rm Isom}(\Delta,{\mathbb Q\mathbb U})$ denote the set of generic representations of $\Delta$. Then $\ku O\cap X_\rho$ is comeagre in $X_\rho$.

The case of finitely generated free Abelian groups $\Z^n$ is of special interest to us. First recall that $\Z^n$ has property (RZ). Also, the space of representations of $\Z^n$ by isometries on ${\mathbb Q\mathbb U}$ can be identified with the set of commuting $n$-tuples in ${\rm Isom}({\mathbb Q\mathbb U})$, i.e.,
$$
{\rm Isom}(\Z^n,{{\mathbb Q\mathbb U}})=\{(g_1,\ldots,g_n)\in {\rm Isom}({{\mathbb Q\mathbb U}})\del g_ig_j=g_jg_i\text{ for }i,j\leqslant n\}.
$$
Denoting by $C(g_1,\ldots,g_n)$ the centraliser of $\{g_1,\ldots, g_n\}$, we see that for $(g_1,\ldots,g_n)\in {\rm Isom}(\Z^n,{{\mathbb Q\mathbb U}})$ and $f\in {\rm Isom}({\mathbb Q\mathbb U})$, we have 
$$
(g_1,\ldots, g_n, f)\in {\rm Isom}(\Z^{n+1},{{\mathbb Q\mathbb U}})\;\;\equi\;\; f\in C(g_1,\ldots,g_n).
$$
Thus, for this special case, our results imply the following.

\begin{cor}
For every finite number $n$ there is a generic commuting $n$-tuple
in ${\rm Isom}({\mathbb Q\mathbb U})$. Moreover, for all generic
commuting $n$-tuples $(g_1,\ldots, g_n)$, there is a comeagre set of
$f\in C(g_1,\ldots,g_n)$ such that $(g_1,\ldots, g_n,f)$ is a
generic commuting $n+1$-tuple. In particular, there is a comeagre
conjugacy class in $C(g_1,\ldots,g_n)$.
\end{cor}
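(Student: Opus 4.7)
The plan is to derive all three assertions by specialising Theorem \ref{generic}, Proposition \ref{coherence prop}, and Theorem \ref{cor coherence} to the tower $\Z^n \leqslant \Z^{n+1}$. The first claim is immediate: $\Z^n$ has property (RZ) (as noted in the excerpt for all finitely generated Abelian groups), so Theorem \ref{generic} produces a generic $\pi \in {\rm Isom}(\Z^n, {\mathbb Q\mathbb U})$, which under the identification ${\rm Isom}(\Z^n, {\mathbb Q\mathbb U}) = \{(g_1,\ldots,g_n) \in {\rm Isom}({\mathbb Q\mathbb U})^n \del g_ig_j = g_jg_i\}$ is exactly a generic commuting $n$-tuple.

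For the second claim, I would apply Theorem \ref{cor coherence} with $G = {\rm Isom}({\mathbb Q\mathbb U})$, $\Gamma = \Z^{n+1}$, $\Delta$ the subgroup generated by the first $n$ standard generators, and $\Lambda$ the subgroup generated by the last. Its hypothesis asks for a generic $(\rho_0,\sigma_0) \in {\rm Rep}(\Z^{n+1},G)$ whose $\Delta$-coordinate is generic in ${\rm Rep}(\Z^n,G)$; this is furnished by taking any generic $\pi \in {\rm Isom}(\Z^{n+1},{\mathbb Q\mathbb U})$ (obtained from the first step applied to $n+1$) and noting that Proposition \ref{coherence prop} guarantees $\pi|_{\Z^n}$ is also generic. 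The conclusion of Theorem \ref{cor coherence} then provides a comeagre set $A \subseteq {\rm Isom}(\Z^n,{\mathbb Q\mathbb U})$ such that for every $\rho = (g_1,\ldots,g_n) \in A$, comeagrely many $\sigma \in {\rm Isom}(\Z^{n+1},{\mathbb Q\mathbb U})_\rho$ extend $\rho$ to a generic $(n+1)$-tuple. Since the excerpt identifies ${\rm Isom}(\Z^{n+1},{\mathbb Q\mathbb U})_\rho$ with the centraliser $C(g_1,\ldots,g_n)$ via the last coordinate, this yields the desired comeagre set inside $C(g_1,\ldots,g_n)$ --- but only for $\rho \in A$, not yet for every generic $\rho$.

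The main obstacle, and the only genuinely non-routine step, is this upgrade from ``comeagrely many generic $\rho$'' to ``every generic $\rho$''. I would handle it by a direct conjugation-invariance argument. The property ``the $f \in C(\rho)$ for which $(\rho,f)$ is generic form a comeagre subset of $C(\rho)$'' is invariant under replacing $\rho$ by $h\cdot\rho\cdot h\inv$, because conjugation by $h$ is a homeomorphism $C(\rho) \to C(h\cdot\rho\cdot h\inv)$ sending generic $(n+1)$-tuples to generic $(n+1)$-tuples. On the other hand, the set of all generic $\rho \in {\rm Isom}(\Z^n,{\mathbb Q\mathbb U})$ is a single $G$-orbit (two comeagre orbits must intersect, hence coincide). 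Since $A$ is comeagre it meets this orbit, and invariance then forces $A$ to contain the whole orbit, giving the conclusion for every generic $\rho$.

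For the final ``In particular'' clause, fix a generic $(g_1,\ldots,g_n)$ and take any two $f_1, f_2 \in C(g_1,\ldots,g_n)$ from the comeagre set above. Both $(g_1,\ldots,g_n,f_1)$ and $(g_1,\ldots,g_n,f_2)$ are generic, hence conjugate in $G$ by some $h$; the coordinatewise equalities $h g_i h\inv = g_i$ force $h \in C(g_1,\ldots,g_n)$, and $h f_1 h\inv = f_2$ then takes place inside this centraliser. Thus the comeagre set of valid $f$ is a single conjugacy class of $C(g_1,\ldots,g_n)$, producing the claimed comeagre conjugacy class.
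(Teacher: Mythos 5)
Your proof is correct and follows essentially the same route as the paper: the first two assertions are obtained exactly as in the discussion preceding the corollary (Theorem \ref{generic}, Proposition \ref{coherence prop}, and Theorem \ref{cor coherence} applied to $\Z^n\leq \Z^{n+1}$), and your final conjugation argument is precisely the paper's own proof of the last clause. The only difference is that you make explicit the upgrade from ``comeagrely many $\rho$'' to ``every generic $\rho$'' via conjugation-invariance of the relevant property and uniqueness of the comeagre orbit, a step the paper leaves implicit.
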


\begin{proof}
Only the very last statement is non-trivial. So let $\ku O\subseteq
C(g_1,\ldots,g_n)$ be the set of $h$ such that $(g_1,\ldots, g_n,h)$
is a generic commuting $n+1$-tuple. Then for all $h,f\in \ku O$
there is some $k\in{\rm Isom}({\mathbb Q\mathbb U})$ such that
$$
(g_1,\ldots, g_n,h)=(kg_1k\inv,\ldots, kg_nk\inv,kfk\inv).
$$
But then $k$ commutes with each of $g_i$ and hence belongs to
$C(g_1,\ldots,g_n)$. So $h$ and $f$ are conjugate by an element of
$C(g_1,\ldots,g_n)$.
\end{proof}

We should mention a curious phenomenon, namely that if
$(g_1,\ldots, g_n)$ is a generic commuting $n$-tuple, then there is
some $k$ such that
$$
kg_1k\inv=g_2,\quad kg_2k\inv=g_3,\;\ldots\;, kg_nk\inv =g_1,
$$
whence, in particular, $k^ng_ik^{-n}=g_i$ for all $i$.
To see this, notice that if $\sigma$ is a permutation of
$\{1,\ldots,n\}$, then 
$$
(h_1,\ldots,h_n)\mapsto (h_{\sigma(1)},\ldots, h_{\sigma(n)})
$$
is a $G$-equivariant homeomorphism of the space of commuting $n$-tuples in $G$ with itself and so maps the comeagre orbit onto itself. In particular, $(g_1,\ldots, g_n)$ and $(g_2,g_3,\ldots, g_n, g_1)$ are diagonally conjugate by some $k\in G$.

\begin{thm}\label{RZ groups} 
Let $\Gamma$ be a countable group that is the increasing union of a chain of finitely generated RZ groups, e.g., if $\Gamma$ itself has property RZ. Then there is a
representation $\pi$ of $\Gamma$ on ${\mathbb Q\mathbb U}$ such that for all
finitely generated subgroups $\Lambda\leqslant \Gamma$ the representation $\pi|_\Lambda$ is generic.
\end{thm}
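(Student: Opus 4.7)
The plan is to realise $\pi$ as the coherent limit of an increasing sequence of generic representations of an approximating chain of subgroups. Write $\Gamma=\bigcup_{n\in \N}\Gamma_n$, where $(\Gamma_n)$ is an increasing chain of finitely generated groups with property (RZ). By Theorem \ref{generic}, the space ${\rm Isom}(\Gamma_n,{\mathbb Q\mathbb U})$ has a generic representation for every $n$, so the task reduces to choosing these generic representations so that they restrict consistently, that is, so that $\pi_{n+1}|_{\Gamma_n}=\pi_n$ for all $n$.

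The key input is Proposition \ref{coherence prop}: since $\Gamma_n$ is a finitely generated subgroup of the finitely generated RZ group $\Gamma_{n+1}$, every generic representation of $\Gamma_n$ in ${\rm Isom}({\mathbb Q\mathbb U})$ arises as the restriction of some generic representation of $\Gamma_{n+1}$ in ${\rm Isom}({\mathbb Q\mathbb U})$. Thus I would build the sequence by induction: fix any generic $\pi_0\in {\rm Isom}(\Gamma_0,{\mathbb Q\mathbb U})$ (provided by Theorem \ref{generic}), and given a generic $\pi_n\in {\rm Isom}(\Gamma_n,{\mathbb Q\mathbb U})$, apply Proposition \ref{coherence prop} to the inclusion $\Gamma_n\leqslant \Gamma_{n+1}$ to obtain a generic $\pi_{n+1}\in {\rm Isom}(\Gamma_{n+1},{\mathbb Q\mathbb U})$ with $\pi_{n+1}|_{\Gamma_n}=\pi_n$.

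Once the sequence $(\pi_n)$ is constructed, define $\pi\in {\rm Isom}(\Gamma,{\mathbb Q\mathbb U})$ by setting $g^\pi=g^{\pi_n}$ whenever $g\in \Gamma_n$; the coherence condition $\pi_{n+1}|_{\Gamma_n}=\pi_n$ guarantees that this is well-defined and is a group homomorphism into ${\rm Isom}({\mathbb Q\mathbb U})$. Now, given any finitely generated subgroup $\Lambda\leqslant \Gamma$, its finitely many generators lie in the directed union $\bigcup_n\Gamma_n$, so $\Lambda\leqslant \Gamma_N$ for some $N$. Consequently $\pi|_\Lambda=\pi_N|_\Lambda$, and since $\pi_N$ is generic in ${\rm Isom}(\Gamma_N,{\mathbb Q\mathbb U})$ and $\Gamma_N$ is a finitely generated RZ group, the first half of Proposition \ref{coherence prop} yields that $\pi_N|_\Lambda$ is generic in ${\rm Isom}(\Lambda,{\mathbb Q\mathbb U})$, as required.

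There is really no serious obstacle here: the genuine content of the theorem has already been captured in Proposition \ref{coherence prop}, which simultaneously provides the inductive extension step and the verification at the end. The only thing to be careful about is noting that the well-definedness of $\pi$ as a homomorphism follows formally from the coherence of the $\pi_n$ on the nested chain, and that finite generation of $\Lambda$ is used precisely to place it inside some $\Gamma_N$.
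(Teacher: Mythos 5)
Your proposal is correct and follows essentially the same route as the paper: an inductive construction of coherent generic representations $\pi_n$ of the approximating chain using Proposition \ref{coherence prop} for both the extension step and the final verification that $\pi|_\Lambda$ is generic once $\Lambda$ is placed inside some $\Gamma_N$. No gaps; this matches the paper's argument.
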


\begin{proof}
Write $\Gamma$ as a union of a chain of
finitely generated RZ subgroups
$$
\Delta_0\leq \Delta_1\leq \Delta_2\leq\ldots\leq \Gamma.
$$
Then, by Proposition \ref{coherence prop}, we can inductively define generic $\pi_n\in {\rm Isom}(\Delta_n,{\mathbb Q\mathbb U})$ such that $\pi_n=\pi_{n+1}|_{\Delta_n}$ for all $n$.
Seeing the $\pi_n$ as homomorphisms from $\Delta_n$ to ${\rm Isom}({\mathbb Q\mathbb U})$, $\bigcup_n\pi_n$ naturally defines a representation $\pi\in {\rm Isom}(\Gamma,{\mathbb Q\mathbb U})$.
To see that $\pi$ is as requested, suppose $\Lambda\leqslant \Gamma$ is any finitely generated subgroup and find $n$ such that $\Lambda\leqslant \Delta_n$. Then $\pi|_{\Delta_n}=\pi_n$ is generic, and so also $\pi|_\Lambda=\pi_n|_{\Lambda}$ is generic by Proposition \ref{coherence prop}.
\end{proof}

Now, since $\Z$ has a generic representation in ${\rm Isom}({\mathbb Q\mathbb U})$, in particular, ${\rm Isom}({\mathbb Q\mathbb U})$ has a comeagre conjugacy class. We shall refer to the elements of this conjugacy class as the {\em generic} elements of ${\rm Isom}({\mathbb Q\mathbb U})$.
The following result was already obtained in \cite{powers} by other means.

\begin{cor}
The generic isometry of ${\mathbb Q\mathbb U}$ has roots of all orders.
Moreover, if $f$ is generic and $n\neq 0$, then $f$ is conjugate
with $f^n$.
\end{cor}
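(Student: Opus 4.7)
\smallskip

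\noindent\textbf{Proof plan.} The plan is to combine Proposition \ref{coherence prop} applied to the subgroup inclusion $n\Z\leqslant \Z$ with the elementary observation that any isomorphism of source groups $\phi\colon \Z\til \Lambda$ induces a $G$-equivariant homeomorphism ${\rm Isom}(\Lambda,{{\mathbb Q\mathbb U}})\til {\rm Isom}(\Z,{{\mathbb Q\mathbb U}})$ by $\tau\mapsto \tau\circ \phi$, which must therefore send the comeagre conjugacy class on one side to the comeagre conjugacy class on the other.

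Concretely, let $G={\rm Isom}({{\mathbb Q\mathbb U}})$, let $\pi\in {\rm Isom}(\Z,{{\mathbb Q\mathbb U}})$ be generic with corresponding generic element $f=\pi(1)\in G$, and fix $n\geqslant 1$. Since $\Z$ has property (RZ) and both $\Z$ and its subgroup $n\Z$ are finitely generated, Proposition \ref{coherence prop} shows that $\pi|_{n\Z}$ is a generic representation of $n\Z$ in $G$. I then transport this along $\phi\colon \Z\til n\Z$, $k\mapsto nk$: the induced $G$-equivariant homeomorphism $\Phi(\tau)=\tau\circ \phi$ sends $\pi|_{n\Z}$ to the representation $1\mapsto \pi(n)=f^n$, which must therefore lie in the comeagre conjugacy class in ${\rm Isom}(\Z,{{\mathbb Q\mathbb U}})$. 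This yields the first key step: $f^n$ is a generic element of $G$, hence conjugate to $f$. Picking a conjugator $k\in G$ with $kf^nk\inv=f$ and setting $h=kfk\inv$, we obtain $h^n=kf^nk\inv=f$, so $f$ has an $n$-th root.

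For negative exponents, I will apply the same device to the automorphism $\alpha(k)=-k$ of $\Z$. The self-homeomorphism $\pi\mapsto \pi\circ \alpha$ of ${\rm Isom}(\Z,{{\mathbb Q\mathbb U}})$ is again $G$-equivariant and thus preserves the comeagre conjugacy class; since it sends $\pi$ to the representation $1\mapsto f^{-1}$, we conclude that $f^{-1}$ is generic, hence conjugate to $f$. Combining with the case $n\geqslant 1$ already established, one obtains that $f^n$ is conjugate to $f$ for every $n\neq 0$.

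I do not anticipate any serious obstacle here. The argument rests on two uses of the principle that genericity is an intrinsic invariant: it is preserved under restriction to finitely generated RZ subgroups (Proposition \ref{coherence prop}) and it is transported by automorphisms of the source group through precomposition. The only point requiring a moment's verification — namely, that the map $\tau\mapsto \tau\circ \phi$ really is a $G$-equivariant homeomorphism and that it carries $\pi|_{n\Z}$ to the representation corresponding to $f^n$ — is immediate from the definitions of the topology and the conjugation action given in Section \ref{rep}.
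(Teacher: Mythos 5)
Your proof is correct, and it reaches the key intermediate fact --- that $f$ and $f^n$ are \emph{both} generic elements of $G$ --- by a slightly different route than the paper. The paper invokes Theorem \ref{RZ groups} to produce a representation $\pi$ of the non-finitely-generated group $\Q$ whose restrictions to the cyclic subgroups $\langle 1\rangle$ and $\langle n\rangle$ are both generic, and then reads off that $h=1^\pi$ and $g=n^\pi=h^n$ are both generic isometries. You instead stay entirely inside $\Z$: you apply Proposition \ref{coherence prop} to the inclusion $n\Z\leqslant\Z$ to see that $\pi|_{n\Z}$ is generic, and then transport along the isomorphism $\Z\cong n\Z$ (and, for negative exponents, along $k\mapsto -k$) via the $G$-equivariant homeomorphism $\tau\mapsto\tau\circ\phi$, which must carry the comeagre orbit to the comeagre orbit. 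This is marginally more economical --- it needs only Proposition \ref{coherence prop}, not the chain construction of Theorem \ref{RZ groups} --- and it makes explicit a point the paper elides, namely that identifying a generic representation of $\langle n\rangle$ with a generic element of $G$ requires choosing a generator and noting that the comeagre class is preserved under the induced reparametrisation. The endgame (both $f$ and $f^n$ generic, hence conjugate; conjugating the equation $kf^nk^{-1}=f$ to exhibit $h=kfk^{-1}$ as an $n$-th root) is identical to the paper's.
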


\begin{proof}
Let $\pi$ be a representation of $\Q$ as given by Theorem
\ref{RZ groups} and fix $n\neq 0$. Then $\pi$ also induces generic representations of the infinite cyclic subgroups  $\Delta=\langle 1\rangle$ and $\Lambda=\langle n\rangle$, and hence $\pi|_\Delta$ and $\pi|_\Lambda$ are therefore conjugate representations. That is, if 
$h=1^\pi$ and $g={n}^\pi=(1+1+\ldots +1)^\pi=h^n$, then $h$ and $g$ are generic and thus congugate in ${\rm Isom}({\mathbb Q\mathbb U})$.

Now, suppose $f$ is any generic element of ${\rm Isom}({\mathbb Q\mathbb U})$. Then there is some $k\in {\rm Isom}({\mathbb Q\mathbb U})$ such that $f=kgk\inv=kh^nk\inv=(khk\inv)^n$, showing that $f$ has an $n$th root, which moreover is generic.

Also, as $h$ is generic, there is some $l$ such that $f=lhl\inv$, whence
$f^n=lh^nl\inv=lgl\inv=lk\inv\cdot kgk\inv\cdot kl\inv=lk\inv\cdot f\cdot kl\inv$, showing that $f$ and $f^n$ are conjugate.
\end{proof}

It is clear that our results hold for a somewhat larger class of metric spaces, namely, for the Fra\"iss\'e limits of finite metric spaces corresponding to a restrictive class of countable distance sets. However, in order not to complicate notation and assumptions, we have chosen to present only the case of $\mathbb Q\mathbb U$, which already contains the ideas for the general case. Let us just  mention that with only minor changes in proofs, we can replace $\mathbb Q\mathbb U$ with the Urysohn metric spaces with distance set $\{0,1,\ldots, n\}$ for any finite $n$. So, e.g., the case $n=1$ corresponds to the case of a countable discrete set and $n=2$ to the case of the random graph.


\end{document}